\documentclass[10pt]{amsart}
\usepackage{graphicx}
\usepackage{amscd}
\usepackage{amsmath}
\usepackage{amsthm}
\usepackage{amsfonts}
\usepackage{amssymb}
\usepackage{mathrsfs}
\usepackage{bm}
\usepackage{enumerate}
\usepackage{amsrefs}
\usepackage{xcolor}
\usepackage[colorlinks, citecolor=blue, linkcolor=red, pdfstartview=FitB]{hyperref}
\usepackage[latin1]{inputenc}

\usepackage{tikz}
\usepackage{tikz-cd}
\usepackage{caption}
\usetikzlibrary{decorations.markings}
\usepackage{lipsum}
\usepackage{mathrsfs}

\usepackage{marginnote}	
\usepackage{soul} 			


\newcommand{\bC}{{\mathbb{C}}}

\newcommand{\bF}{{\mathbb{F}}}

\newcommand{\bM}{{\mathbb{M}}}


  \newcommand{\E}{{\mathcal{E}}}
  \newcommand{\F}{{\mathcal{F}}}

\renewcommand{\O}{{\mathcal{O}}}

\renewcommand{\S}{{\mathcal{S}}}



\newcommand{\rC}{\mathrm{C}}


\renewcommand{\phi}{\varphi}

\newcommand{\upchi}{{\raise.35ex\hbox{$\chi$}}}



\newcommand{\mycomment}[1]{}



\newcommand{\Ab}{\operatorname{Ab}}

\newcommand{\CE}{\operatorname{CE}}

\newcommand{\Exp}{\operatorname{Ex}}

\newcommand{\id}{\operatorname{id}}

\newcommand{\ran}{\operatorname{ran}}

\newcommand{\spn}{\operatorname{span}}

\newtheorem{lemma}{Lemma}[section]
\newtheorem{theorem}[lemma]{Theorem}
\newtheorem{proposition}[lemma]{Proposition}
\newtheorem{corollary}[lemma]{Corollary}
\newtheorem{theoremx}{Theorem}

\theoremstyle{definition}

\newtheorem{example}{Example}

\date{\today}

\author{Rapha\"el Clou\^atre}
\address{Department of Mathematics, University of Manitoba, Winnipeg, Manitoba, Canada R3T 2N2}
\email{raphael.clouatre@umanitoba.ca\vspace{-2ex}}

\author{Colin Krisko}
\email{kriskoc4@myumanitoba.ca\vspace{-2ex}}

\thanks{R.C. was partially supported by an NSERC Discovery Grant.}

\title[$\rC^*$-supports and abnormalities]{$\rC^*$-supports and abnormalities of operator systems}
\begin{document}
\begin{abstract}
Let $S$ be a concrete operator system represented on some Hilbert space\ $H$. A $\rC^*$-support of $S$ is the $\rC^*$-algebra generated (via the Choi--Effros product) by $S$ inside an injective operator system  acting on $H$. By leveraging Hamana's theory, we show that such a $\rC^*$-support is unique precisely when $\rC^*(S)$ is contained in every copy of the injective envelope of $S$ that acts on $H$. Further, we demonstrate how the uniqueness of certain $\rC^*$-supports can be used to give new characterizations of the unique extension property for $*$-representations, as well as the hyperrigidity of $S$. In another direction, we utilize the collection of all $\rC^*$-supports of $S$ to describe the subspace generated by the so-called abnormalities of $S$, thereby complementing a result of Kakariadis.
\end{abstract}
\maketitle

\section{Introduction}

In the theory of operator systems, a fundamental problem is to identify certain special extreme points among completely positive maps; these are the so-called absolute extreme points \cite{EHKM2018},\cite{EH2019} or the nc extreme points \cite{DK2019}. 
To be more precise, fix a Hilbert space $H$ and let $S\subset B(H)$ be an operator system. Put $A=\rC^*(S)$. Let  $\pi:A\to B(H_\pi)$ be a unital $*$-representation. Let $E$ denote the set of unital completely positive maps $\phi:A\to B(H_\pi)$ agreeing with $\pi$ on $S$.  When $E$ reduces to the singleton $\{\pi\}$, we say that $\pi$ has \emph{unique extension property} with respect to $S$. As is now well understood, the irreducible $*$-representations of this type  can be meaningfully interpreted as points in the noncommutative Choquet boundary of $S$ \cite{kleski2014bdry},\cite{DK2019},\cite{CTh2023}. Furthermore, the question of $E$ being a singleton relates to computations of the $\rC^*$-envelope of $S$ \cite{dritschel2005},\cite{arveson2008},\cite{DK2015}, as well as the hyperrigidity conjecture (see \cite{DK2021},\cite{CS2023},\cite{BDO2024},\cite{CTh2024}, \cite{bilich2025} and the references therein). 

Since the unique extension property is well behaved with respect to direct sums, in determining whether $\pi$ has the unique extension property, we may assume without loss of generality that $\pi$ is in fact injective. To simplify notation in the foregoing discussion, we thus choose to focus on the case where $\pi$ is simply  the identity representation of $A$. When $A$ contains the ideal of compact operators on $H$, Arveson's famous boundary theorem \cite[Theorem 2.1.1]{arveson1972} gives a simple mechanism to detect the unique extension property. Our main results do not require this assumption, and are altogether quite different in spirit, as we explain next.

In order for $E$ to be a singleton,  it must be that $\phi(a)=a$ for every $a\in A$ and $\phi\in E$. Following the terminology introduced by Kakariadis in \cite{kakariadis2013}, an element of the form  $\phi(a)-a$, for $\phi\in E$ and $a\in A$, will be called an \emph{abnormality} of $S$. Throughout, we denote by $\Ab(S)\subset B(H)$ the collection of all abnormalities.  
 Our new contributions in this paper are inspired by a result from  \cite{kakariadis2013} about the  ``restricted" abnormalities $\Ab(S)\cap A$, which utilized the theory of injective envelopes. To properly frame our work, we recount some of the details below; a more thorough account is given in Section \ref{S:injenv}.

While the original impetus to study the unique extension property was Arveson's proposed program for constructing the $\rC^*$-envelope \cite{arveson1969}, this took several decades to fully crystallise. In the meantime, Hamana \cite{hamana1979} had given the first proof of existence of the $\rC^*$-envelope, building on a rather different set of tools and perspective.  An \emph{$S$-projection} is a unital completely positive idempotent $\theta:B(H)\to B(H)$ such that $\theta|_S=\id$. There is a natural partial order defined on the set of all $S$-projections, and it can be shown that minimal elements exist. These minimal $S$-projections are pivotal in Hamana's construction. Indeed, if $\theta$ is a minimal $S$-projection, then its range $R$ is a so-called \emph{injective envelope} for $S$ that can, in addition, be turned into a $\rC^*$-algebra using the Choi--Effros product induced by $\theta$. Furthermore, the $\rC^*$-subalgebra generated by $S$ inside of $R$ can be verified to be the $\rC^*$-envelope of $S$. 

Naturally, then,  minimal $S$-projections carry significant information about the operator system $S$. Kakariadis further reinforced this by showing in \cite{kakariadis2013} that
\begin{equation}\label{Eq:Kak}
\Ab(S)\cap A=\ker \theta \cap A
\end{equation}
 for any minimal $S$-projection $\theta$. As well, he showed that this set coincides with the so-called Shilov ideal of $S$ inside of $A$, as introduced by Arveson in \cite{arveson1969}. Notably,  not only does this statement describe a portion of $\Ab(S)$, but it also shows that $\ker \theta\cap A$ is \emph{independent} of $\theta$. (While the results of \cite{kakariadis2013} are stated within the framework of unital completely contractive maps on unital operator spaces, there is a well-known equivalence between this category and our category of interest consisting of unital completely positive maps on operator systems; see \cite[Proposition 2.12]{paulsen2002}).

 From the point of view of the foregoing discussion on abnormalities and their connection with the unique extension property for the identity representation of $A$, it appears as though \eqref{Eq:Kak} paints an incomplete picture. Indeed, the natural problem that arises is to describe the full set $\Ab(S)$, and not just its intersection with $A$. In Section \ref{S:Cstarext}, we aim to understand the configuration of all injective envelopes of $S$ inside of $B(H)$, relative to $A$.  While our motivation is to use the resulting information to describe the full set of abnormalities in order to improve \eqref{Eq:Kak}, we study this question rather extensively for its own sake and obtain results of independent interest. It should be mentioned that this approach of leveraging not just a single copy, but rather all copies, of the injective envelope, has been employed previously towards different goals \cite{paulsen2011}. In order to summarize our results, we introduce some terminology. 

A \emph{$\rC^*$-extension} of $S$ is a pair $(B,j)$ consisting of a unital $\rC^*$-algebra $B$ along with a unital completely isometric map $j:S\to B$ such that $B=\rC^*(j(S))$.  An operator system $X\subset B(H)$ containing $S$ will be said to be a \emph{$\rC^*$-support of $S$} if there is a $\rC^*$-extension $(B,j)$ of $S$ along with a unital surjective completely isometric map  $\Gamma:B\to X$ such $\Gamma\circ j=\id$. In other words, a $\rC^*$-support is, roughly speaking, a linear copy of a $\rC^*$-extension. We note here that operator systems that are completely isometric to $\rC^*$-algebras have been of interest in various settings; see for instance \cite{KW1998},\cite{CW2025},\cite{courtney2025} and the references therein.
 
We show in Theorem \ref{T:Cstarext} that all $\rC^*$-supports arise in a rather concrete way, namely from the Choi--Effros product induced by an $S$-projection. Furthermore, this projection may be chosen to be minimal precisely when $(B,j)$ is a $\rC^*$-envelope of $S$ (Corollary \ref{C:coversenv}).
The following is a summary of Theorem  \ref{T:Shtrivial} along with Corollary \ref{C:uniqueCE}, and is our first main result.

\begin{theoremx}\label{T:A}
Let $S\subset B(H)$  be an operator system and let $A=\rC^*(S)$. Consider the following statements.
\begin{enumerate}[{\rm (i)}]
\item $A$ is the unique $\rC^*$-support of $S$.
\item $A$ is contained  in every copy of the injective envelope of $S$ inside $B(H)$.
\item $A$ is a $\rC^*$-envelope of $S$.
\item $A$ is contained in some copy of the injective envelope of $S$ inside $B(H)$.
\end{enumerate}
Then, we have 
\[
{\rm (i)}\Leftrightarrow{\rm (ii)}\Rightarrow{\rm (iii)}\Leftrightarrow{\rm (iv)}.
\]
\end{theoremx}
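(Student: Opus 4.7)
The plan is to organize everything around Theorem \ref{T:Cstarext} and Corollary \ref{C:coversenv}: every $\rC^*$-support of $S$ in $B(H)$ has the form $\rC^*_\theta(S)\subset \ran\theta$ for some $S$-projection $\theta$, with $\theta$ minimal precisely when this $\rC^*$-support is a $\rC^*$-envelope. The single technical observation that drives nearly every implication is the following. If $\theta$ is any $S$-projection and $B\subset B(H)$ is a unital $\rC^*$-subalgebra with $B\subset \ran\theta$, then the Choi--Effros product $x\cdot_\theta y = \theta(xy)$ coincides with the ambient operator product on $B$, since for $a,b\in B$ one has $ab\in B\subset \ran\theta$, and hence $\theta(ab)=ab$.

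I would begin with (iii)$\Leftrightarrow$(iv). The direction (iii)$\Rightarrow$(iv) is essentially a restatement of Corollary \ref{C:coversenv}, realising a $\rC^*$-envelope as $\rC^*_\theta(S)\subset \ran\theta$ for some minimal $\theta$. Conversely, if $A\subset R$ for some copy $R$ of the injective envelope, then $R=\ran\theta$ for a minimal $S$-projection $\theta$ (here I use the standard identification: injectivity of $R$ extends $\id_R$ to an $S$-projection onto $R$, and Hamana's characterisation makes it minimal), and the key observation identifies $A$ with the $\rC^*$-subalgebra of $(R,\cdot_\theta)$ generated by $S$, which is precisely Hamana's realisation of the $\rC^*$-envelope. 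Next, for (i)$\Rightarrow$(ii), I take an arbitrary copy $R=\ran\theta$ of the injective envelope with $\theta$ minimal; then $\rC^*_\theta(S)$ is a $\rC^*$-support sitting inside $R$, and uniqueness immediately yields $A=\rC^*_\theta(S)\subset R$. For (ii)$\Rightarrow$(i), I let $X=\rC^*_\theta(S)$ be an arbitrary $\rC^*$-support, use Zorn's lemma in the Hamana order to extract a minimal $\theta_0\preceq \theta$ (so $\ran\theta_0\subset \ran\theta$ is a copy of the injective envelope), and then (ii) puts $A$ inside $\ran\theta_0\subset \ran\theta$; the key observation then identifies $X$ with the operator-product $\rC^*$-algebra generated by $S$, namely $A$. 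Finally (i)$\Rightarrow$(iii) is immediate, since $\rC^*$-envelope-valued $\rC^*$-supports exist by Hamana and Corollary \ref{C:coversenv}, so uniqueness forces $A$ to be one.

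The main obstacle I anticipate is notational rather than conceptual: keeping the operator product on $B(H)$ and the Choi--Effros product on $\ran\theta$ conceptually separate while verifying at each application that they agree on the subalgebra under consideration. A secondary point, but a necessary one, is the bijective correspondence between copies of the injective envelope of $S$ inside $B(H)$ and ranges of minimal $S$-projections on $B(H)$; the forward direction uses injectivity of the envelope to extend the identity to an $S$-projection, and the reverse direction is Hamana's characterisation of minimal $S$-projections.
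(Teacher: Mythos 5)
Your argument is correct, and while it is built on the same scaffolding as the paper (Theorem \ref{T:Cstarext} to represent every $\rC^*$-support as $\alpha_\theta^{-1}(\CE(S,\theta))$, Corollary \ref{C:coversenv} for the minimal case, and the correspondence between envelope copies and ranges of minimal $S$-projections via Theorems \ref{T:Hamana} and \ref{T:minproj}), it diverges from the paper's proof in how the two ``hard'' implications are closed. The paper proves (iv)$\Rightarrow$(iii) by taking $a$ in the Shilov ideal and invoking Proposition \ref{P:CEcomp}(i) with $\psi=\id$ to force $\theta(a)=0=a$; this buys a strictly stronger statement (elementwise containment of $A$ in envelope copies already suffices), which your route does not recover but which Theorem \ref{T:A} does not require. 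Your replacement -- the observation that if $A\subset\ran\theta$ then $*_\theta$ agrees with the ambient product on $A$, so the $\rC^*$-algebra generated by $S$ inside $\CE(\ran\theta,\theta)$ is literally $A$ and Hamana's Theorem \ref{T:Cstarenv} identifies it with a $\rC^*$-envelope -- is more elementary and also streamlines (ii)$\Rightarrow$(i): you keep the arbitrary (possibly non-minimal) $S$-projection $\theta$ furnished by Theorem \ref{T:Cstarext}, pass to a minimal $\theta_0$ with $\ran\theta_0\subset\ran\theta$ only to invoke (ii), and then conclude $X=\theta(A)=A$ directly, in effect reproving Proposition \ref{P:CEcomp}(ii) without minimality; the paper instead routes through Theorem \ref{T:Shtrivial} and Corollary \ref{C:coversenv} to get a minimal representative of $X$ before applying Proposition \ref{P:CEcomp}(ii). (Your $\theta_0$ need not satisfy $\theta_0\prec\theta$ in the Hamana order, but Theorem \ref{T:minproj} gives $\ran\theta_0\subset\ran\theta$, which is all you use.) The only step I would ask you to make explicit is in (i)$\Rightarrow$(iii): from uniqueness you get a unital surjective complete isometry $\Gamma:\CE(S,\theta)\to A$ with $\Gamma\circ\alpha_\theta|_S=\id$, and to conclude that $A$ \emph{with its ambient $\rC^*$-structure} is a $\rC^*$-envelope you must upgrade $\Gamma$ to a $*$-isomorphism via Proposition \ref{P:CEtrik} (exactly as the paper does), or else bypass this by noting that $A=\alpha_\theta^{-1}(\CE(S,\theta))\subset\ran\theta$ and quoting your own (iv)$\Rightarrow$(iii); as written this is a one-line gloss, not a gap.
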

The proof of this result requires a fairly delicate analysis of Choi--Effros products, which we perform in Section \ref{S:injenv}. As an application, we combine Theorem \ref{T:A} with results of Paulsen \cite{paulsen2011} to give  in Corollary  \ref{C:uep} the following new characterization of the unique extension property for injective $*$-representations.

\begin{theoremx}\label{T:C}
Let $\pi:A\to B(H)$ be an injective unital $*$-representation.  Then, the following statements are equivalent.
\begin{enumerate}[{\rm (i)}]
\item $\pi$ has the unique extension property with respect to $S$. 
\item $\pi(A)$ is the unique $\rC^*$-support of $\pi(S)$.
\end{enumerate}
\end{theoremx}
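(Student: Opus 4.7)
The plan is to combine the equivalence $(i)\Leftrightarrow(ii)$ of Theorem \ref{T:A} with a characterization of the unique extension property due to Paulsen \cite{paulsen2011}, following a small preliminary reduction to the identity representation. Since $\pi$ is an injective unital $*$-representation, it is a unital complete order isomorphism of $A$ onto $\pi(A)=\rC^*(\pi(S))$, and composition with $\pi^{-1}$ provides a bijection between UCP maps $\phi:A\to B(H)$ agreeing with $\pi$ on $S$ and UCP maps $\psi:\pi(A)\to B(H)$ agreeing with the identity on $\pi(S)$. Consequently, statement (i) is equivalent to the identity representation of $\pi(A)$ possessing the UEP with respect to $\pi(S)$.

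With this reduction in hand, I would apply Theorem \ref{T:A} to the operator system $\pi(S)\subset B(H)$, whose generated $\rC^*$-algebra is $\pi(A)$. The equivalence $(i)\Leftrightarrow(ii)$ there shows that statement (ii) of the present theorem is equivalent to the inclusion of $\pi(A)$ in every copy of the injective envelope of $\pi(S)$ inside $B(H)$. The proof is then closed by invoking Paulsen's result from \cite{paulsen2011}, which identifies precisely this inclusion property with the identity representation of $\pi(A)$ having the UEP with respect to $\pi(S)$. Chaining these three equivalences yields $(i)\Leftrightarrow(ii)$.

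The conceptual content is carried entirely by Theorem \ref{T:A} and by Paulsen's theorem, so the argument is largely a matter of stitching these together. The main obstacle, such as it is, is therefore translational: one must match Paulsen's formulation---typically phrased in terms of $S$-projections or of intersections of their ranges---with the $\rC^*$-support language used in Theorem \ref{T:A}. This is handled using the correspondence, established in Section \ref{S:Cstarext}, between (minimal) $S$-projections and injective envelopes of $S$ inside $B(H)$, together with the fact that minimal $S$-projections always exist so that the ``every envelope'' formulation has content on the correct side of the equivalence.
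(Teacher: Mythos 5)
Your proposal is correct and follows essentially the same route as the paper's proof: the paper likewise reduces (i) to the identity representation of $\pi(A)$ having the unique extension property with respect to $\pi(S)$ (via Arveson's invariance principle, which your explicit composition with $\pi^{-1}$ replaces), translates (ii) through Corollary \ref{C:uniqueCE} into containment of $\pi(A)$ in every copy of the injective envelope of $\pi(S)$ inside $B(H)$, and closes the loop with Paulsen's result \cite[Proposition 4.3]{paulsen2011}. The only step left implicit in your write-up is the short Arveson-extension argument showing that the identity representation of $\pi(A)$ has the unique extension property precisely when $\pi(A)$ is fixed pointwise by every $\pi(S)$-map on $B(H)$, which is exactly the ``translational'' bridge between Paulsen's formulation (an intersection of envelope copies) and the unique extension property.
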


Similarly, Corollary \ref{C:hr} characterizes  hyperrigid operator systems using our notion of $\rC^*$-support. 

In Section \ref{S:ab}, we turn to the question of describing the full set of abnormalities. 
We define the \emph{$\rC^*$-expanse} of $S$ as
\[
\rC^*\Exp(S)=\spn\{\theta(A) \}
\] 
where  the span is taken over all  minimal $S$-projections $\theta$. We show in Theorem \ref{T:expanse} that, when the Shilov ideal of $S$ in $A$ is trivial, then the $\rC^*$-expanse of $S$ coincides with the subspace generated by all $\rC^*$-supports of $S$. 
Our second main result reads as follows; see Theorem \ref{T:abdescr}.

\begin{theoremx}\label{T:B}
Assume that $A$ is a $\rC^*$-envelope of $S$. Then,
we have  \[\spn \Ab(S)=\rC^*\Exp(S)\cap \ker \theta\] for every minimal $S$-projection $\theta$.
\end{theoremx}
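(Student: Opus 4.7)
The plan is to prove the two inclusions separately. The workhorse observation is that for every minimal $S$-projection $\tau$ with range $R_\tau$, the restriction $\tau|_A\colon A\to (R_\tau,\cdot_\tau)$ is a $*$-homomorphism extending $\id_S$: applying Choi's multiplicative-domain theorem to $\tau$ viewed as a UCP map into the $\rC^*$-algebra $(R_\tau,\cdot_\tau)$ shows that $R_\tau$ itself (being fixed by $\tau$) lies in the multiplicative domain, whence iteration yields $\tau(s_1\cdots s_n)=s_1\cdot_\tau\cdots\cdot_\tau s_n$ for $s_i\in S$, and multiplicativity then extends to all of $A$ by continuity. Moreover, since $A=\rC^*(S)$, any two $*$-homomorphisms on $A$ that agree on $S$ must coincide.

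For the inclusion $\spn\Ab(S)\subseteq\rC^*\Exp(S)\cap\ker\theta$, fix $\phi\in E$ and $a\in A$, and extend $\phi$ by Arveson's theorem to a UCP map $\tilde\phi\colon B(H)\to B(H)$ with $\tilde\phi|_S=\id_S$. For the $\ker\theta$ part, $\theta\tilde\phi$ is a UCP map into $R_\theta$ fixing $S$, so rigidity of the injective envelope forces $\theta\tilde\phi|_{R_\theta}=\id_{R_\theta}$, whence $\theta\tilde\phi\theta=\theta$ and $(\theta\tilde\phi)^2=\theta\tilde\phi$; thus $\theta\tilde\phi$ is an $S$-projection whose range equals $R_\theta$, hence minimal. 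Rigidity also forces $\cdot_{\theta\tilde\phi}=\cdot_\theta$ on $R_\theta$, so the workhorse observation applied to both $\tau=\theta$ and $\tau=\theta\tilde\phi$ yields $\theta\tilde\phi|_A=\theta|_A$, giving $\theta(\phi(a))=\theta(a)$ and $\phi(a)-a\in\ker\theta$. For the $\rC^*\Exp(S)$ part, Theorem~\ref{T:A} supplies a minimal $S$-projection $\theta_0$ with $A\subseteq R_{\theta_0}$, whence $\theta_0|_A=\id_A$; the same calculation shows $\theta_0\tilde\phi\theta_0=\theta_0$ and $(\tilde\phi\theta_0)^2=\tilde\phi\theta_0$, so $\tilde\phi\theta_0$ is an $S$-projection, and its range $\tilde\phi(R_{\theta_0})$ is a completely isometric copy of $R_{\theta_0}$ by essentiality of the injective envelope, hence itself an injective envelope; thus $\tilde\phi\theta_0$ is minimal. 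It follows that $\phi(A)=\tilde\phi(A)=\tilde\phi\theta_0(A)\subseteq\rC^*\Exp(S)$, and since also $a=\theta_0(a)\in\theta_0(A)\subseteq\rC^*\Exp(S)$, we conclude $\phi(a)-a\in\rC^*\Exp(S)$.

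For the reverse inclusion, let $x\in\rC^*\Exp(S)\cap\ker\theta$ and write $x=\sum_i c_i\theta_i(a_i)$ with $\theta_i$ minimal $S$-projections and $a_i\in A$. Each $\theta_i|_A$ belongs to $E$, so each $\theta_i(a_i)-a_i$ is an abnormality. Decompose $x=z+y$ with $z=\sum_i c_i a_i\in A$ and $y=\sum_i c_i(\theta_i(a_i)-a_i)\in\spn\Ab(S)$. The previous paragraph gives $\theta(y)=0$, so from $\theta(x)=0$ we get $\theta(z)=0$; by Kakariadis' identity~\eqref{Eq:Kak} this means $z\in A\cap\ker\theta=\Ab(S)\cap A\subseteq\spn\Ab(S)$, and hence $x=z+y\in\spn\Ab(S)$.

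I expect the main obstacle to be the identification $\theta\tilde\phi|_A=\theta|_A$, which depends on delicately combining the multiplicative-domain structure underlying Choi--Effros products, Hamana's rigidity of injective envelopes, and the fact that $A=\rC^*(S)$; the remaining steps, including the construction of the auxiliary minimal $S$-projection $\tilde\phi\theta_0$ and the concluding bookkeeping using \eqref{Eq:Kak}, are comparatively routine.
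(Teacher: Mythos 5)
Your proof is correct and follows essentially the paper's own route: the forward inclusion amounts to Proposition \ref{P:minuep} (which you re-derive via multiplicative domains and rigidity) together with the construction of the auxiliary minimal $S$-projection $\tilde\phi\circ\theta_0$, exactly as in Theorem \ref{T:expanse}(i) via Theorem \ref{T:Shtrivial}, while the reverse inclusion uses the same decomposition $x=z+y$ with $z\in A\cap\ker\theta$ handled by \eqref{Eq:Kak} (in the paper, \eqref{Eq:shilov} plus Proposition \ref{P:abShi}). The only imprecision is attributing the identity $\cdot_{\theta\tilde\phi}=\cdot_\theta$ on $\ran\theta$ to rigidity; it really follows from uniqueness of the $\rC^*$-structure on a fixed operator system (Proposition \ref{P:CEtrik}), or you can bypass this step entirely by citing Proposition \ref{P:minuep}, which asserts precisely that $\theta\circ\tilde\phi=\theta$ on $A$.
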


Much like in Kakariadis' result \eqref{Eq:Kak}, we find that the intersection $\rC^*\Exp(S)\cap \ker\theta$ does not depend on the choice of minimal projection $\theta$.

It is a consequence of Hamana's theory  that, for a minimal $S$-projection $\theta$, the quotient map $B(H)\to B(H)/\ker \theta$  enjoys what we call the \emph{unique $(B(H)/\ker \theta)$-extension property} with respect to $S$ (see Proposition \ref{P:quotienttight}). Here, $B(H)/\ker \theta$ is given the operator system quotient structure.
This naturally leads us to the question of whether there is a quotient of $B(H)$ which, similarly, ``forces" some sort of unique extension property on the identity representation of $A$.
More precisely, we are looking for a closed subspace $D\subset B(H)$, with corresponding quotient map $\delta:B(H)\to B(H)/D$, for which $\delta\circ \psi=\delta$ whenever  $\psi:A\to B(H)$ is a unital completely positive map acting as the identity on $S$.  To avoid trivialities, we also require that $\delta$ be completely isometric on $S$, where $B(H)/D$ is given the operator space quotient structure. As an application of Theorem \ref{T:B}, we show in Corollary \ref{C:uepquotient} that the norm-closed subspace generated by $\Ab(S)$ has this property.

Finally, in Section \ref{S:example}, we close the paper by analysing the set of abnormalities in some concrete examples, and by contrasting it with Kakariadis' space $\Ab(S)\cap A$.

\section{Injective envelopes and Choi--Effros products}\label{S:injenv}

In this preliminary section, we gather some background material and prove some technical facts that are required later.

\subsection{Hamana's theory of injective envelopes}

We will use standard terminology and facts for operator spaces and operator systems, as can be found in many references such as \cite{ER2022} and \cite{paulsen2002}.

Let $R$ be an operator system. Recall that $R$ is said to be \emph{injective} if, given an inclusion $S\subset T$ of operator systems and a unital completely positive map $\psi:S\to R$, there exists a unital completely positive map $\Psi:T\to R$ extending $\psi$. In the case where $R$ is concretely represented on some Hilbert space $H$ (so that $R\subset B(H)$), it easily follows from Arveson's extension theorem that $R$ is injective precisely when there is a unital completely positive idempotent map $\theta:B(H)\to B(H)$ with range equal to $R$.

Next, let $S$ be an arbitrary operator system. An \emph{extension} of $S$ is a pair $(R,\iota)$ consisting of an operator system $R$ and a unital complete isometry $\iota:S\to R$. When $R$ happens to be injective, then we say that $(R,\iota)$ is an injective extension of $S$.
An injective extension $(R,\iota)$ will be said to be an \emph{injective envelope} of $S$ if it has the property that given any other injective operator system $T$ with $\iota(S)\subset T\subset R$, we must have $T=R$. 

There are two other important properties that are relevant for our purposes. Let $(R,\iota)$ be an extension of $S$. We say that $(R,\iota)$ is \emph{rigid} if the only unital completely positive map $\phi:R\to R$ satisfying $\phi\circ \iota=\iota$ is the identity map. Also, we say that $(R,\iota)$ is \emph{essential} if, whenever $X$ is an operator space and  $\phi:R\to X$ is a completely contractive map such that $\phi\circ\iota$ is completely isometric, it follows that in fact $\phi$ is completely isometric.

The following summarizes the well-known relationships between these notions, which we record here for ease of reference below.

\begin{proposition}\label{P:injenvchar}
Let $S$ be an arbitrary operator system and let $(R,\iota)$ be an extension of $S$. Then, the following statements are equivalent.
\begin{enumerate}[{\rm (i)}]
\item $(R,\iota)$ is an injective envelope of $S$.
\item $R$ is injective and $(R,\iota)$ is a rigid extension of $S$.
\item $R$ is injective and $(R,\iota)$ is an essential extension of $S$.
\end{enumerate}
\end{proposition}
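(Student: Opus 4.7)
My plan is to prove the cycle of implications (ii)$\Rightarrow$(i)$\Rightarrow$(ii) and (ii)$\Rightarrow$(iii)$\Rightarrow$(ii); injectivity of $R$ is part of all three conditions and may be freely used. The two short implications come from direct use of injectivity and Arveson extension. For (ii)$\Rightarrow$(i), any injective $T$ with $\iota(S)\subset T\subset R$ admits a ucp retraction $\rho:R\to T\subset R$ satisfying $\rho\circ\iota=\iota$, so rigidity forces $\rho=\id_R$ and thus $T=R$. For (ii)$\Rightarrow$(iii), given a completely contractive $\phi:R\to X$ with $\phi\circ\iota$ completely isometric, I embed $X\subset B(K)$ and extend $(\phi\circ\iota)^{-1}:\phi(\iota(S))\to R$ to a ucp $\Psi:B(K)\to R$ using injectivity of $R$; then $\Psi\circ\phi:R\to R$ is ucp with $\Psi\circ\phi\circ\iota=\iota$, so rigidity gives $\Psi\circ\phi=\id_R$, whence $\phi$ is completely isometric (it has a completely contractive left inverse).

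The reverse implications are driven by a single Ces\`aro averaging construction. Given a ucp map $\phi:R\to R$ with $\phi\circ\iota=\iota$, I realize $R$ concretely inside some $B(K)$ and form
\[
\phi_N=\frac{1}{N}\sum_{n=0}^{N-1}\phi^n.
\]
These lie in the point-weak-* compact convex set of ucp maps $R\to B(K)$, so a cluster point $\phi_\infty$ exists. The identity $\phi\circ\phi_N-\phi_N=\frac{1}{N}(\phi^N-\id)$ and the trivial equality $\phi_\infty\circ\phi_N=\phi_\infty$ pass to the limit and yield
\[
\phi\circ\phi_\infty=\phi_\infty\circ\phi=\phi_\infty \qand \phi_\infty\circ\phi_\infty=\phi_\infty.
\]
Consequently, $\phi_\infty$ is a ucp idempotent on $R$ with $\phi_\infty\circ\iota=\iota$.

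It remains to deduce $\phi_\infty=\id_R$ under each of the two hypotheses, from which $\phi=\id_R$ follows via $\phi\circ\phi_\infty=\phi_\infty$. For (i)$\Rightarrow$(ii), the range $T=\phi_\infty(R)$, equipped with its Choi--Effros product, is an injective operator system containing $\iota(S)$ and sitting inside $R$, so minimality of $R$ forces $T=R$ and hence $\phi_\infty=\id_R$. For (iii)$\Rightarrow$(ii), essentiality applied to $\phi_\infty$ (whose composition with $\iota$ is the complete isometry $\iota$) shows $\phi_\infty$ is completely isometric, hence injective; idempotency then gives $\phi_\infty(x-\phi_\infty(x))=0$ for every $x\in R$, forcing $x=\phi_\infty(x)$, so $\phi_\infty=\id_R$.

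The chief technical obstacle is the Ces\`aro step: one must select a topology (point-weak-*) in which the relevant convex set of ucp maps is compact, and one must verify that the compositional identities survive the passage to the limit. A further detail requiring care is that the range of a ucp idempotent on an injective operator system is again injective under the Choi--Effros product, which underwrites the minimality argument in (i)$\Rightarrow$(ii).
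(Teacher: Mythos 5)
Your forward implications are fine: (ii)$\Rightarrow$(i) is exactly the paper's argument (a ucp retraction $\rho:R\to T$ from injectivity of $T$, then rigidity), and (ii)$\Rightarrow$(iii) works modulo a small imprecision: $\phi(\iota(S))\subset B(K)$ need not be an operator system and $(\phi\circ\iota)^{-1}$ need not be ucp on it, so you should extend it to a \emph{completely contractive} $\Psi:B(K)\to R$ (injective operator systems are injective operator spaces) and then observe that $\Psi\circ\phi$ is unital and completely contractive, hence ucp, before invoking rigidity.

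The genuine gap is in the Ces\`aro step, and it breaks your proof of (i)$\Rightarrow$(ii). A point-weak-$*$ cluster point $\phi_\infty$ of the maps $\phi_N:R\to R\subset B(K)$ is only a ucp map from $R$ into the weak-$*$ \emph{closure} of $R$; an injective operator system is norm closed but in general not weak-$*$ closed in the chosen representation (it need not be a von Neumann algebra), so $\phi\circ\phi_\infty$ and $\phi_\infty\circ\phi_\infty$ are not even defined. Moreover, even after repairing the range (say by composing with a ucp projection of $B(K)$ onto $R$), the identities $\phi\circ\phi_\infty=\phi_\infty$ and $\phi_\infty\circ\phi_\infty=\phi_\infty$ do not pass to the limit: they would require the outer map ($\phi$, resp.\ $\phi_\infty$) to be weak-$*$-to-weak-$*$ continuous, which ucp maps need not be. The only identity that survives is $\phi_\infty\circ\phi=\phi_\infty$, since there the error term $\tfrac1N(\phi^N-\id)$ tends to $0$ in norm and the limit occurs in the outer slot. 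Hence you have produced neither a ucp idempotent on $R$ nor an injective subsystem $T\subset R$, and the minimality argument for (i)$\Rightarrow$(ii) collapses; this is precisely the delicate point that makes Hamana's and Paulsen's treatments (minimal $S$-seminorms, Zorn's lemma, and the estimate $\|\phi_N(\phi(x)-x)\|\le 2\|x\|/N$) more involved, and which the paper sidesteps by citing \cite[Corollary 15.7]{paulsen2002}. Your (iii)$\Rightarrow$(ii) also leans on the unjustified idempotency, but that one is easily repaired without any limit: each $\phi_N$ is ucp from $R$ to $R$ with $\phi_N\circ\iota=\iota$, so essentiality makes $\phi_N$ completely isometric, and then $\|\phi(x)-x\|=\|\phi_N(\phi(x)-x)\|\le 2\|x\|/N\to 0$ gives $\phi=\id$ (this is Hamana's argument); alternatively, essentiality applied to $\phi_\infty$ plus the valid identity $\phi_\infty\circ\phi=\phi_\infty$ gives $\phi_\infty(\phi(x)-x)=0$ and hence $\phi=\id$, with no need to know $\phi_\infty=\id_R$. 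As it stands, though, (i)$\Rightarrow$(ii) is unproved, so the cycle of implications is not closed.
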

\begin{proof}
(i)$\Rightarrow$(ii): See \cite[Corollary 15.7]{paulsen2002}.

(ii)$\Leftrightarrow$(iii): See \cite[Lemma 3.7]{hamana1979}.

(ii)$\Rightarrow$(i): Let $T$ be an injective operator system with $\iota(S)\subset T\subset R$. Let $\phi:T\to R$ denote the inclusion map, which is unital, completely isometric and satisfies $\phi\circ\iota=\iota$. Let $\psi:R\to T$ be a unital completely positive extension of the identity $T\to T$. Then, $\phi\circ \psi:R\to R$ satisfies $\phi\circ \psi\circ \iota=\iota$. Since $(R,\iota)$ is rigid, we infer that $\phi\circ\psi$ is the identity on $R$. Thus, $\phi$ is surjective and $R=T$.
\end{proof}

A straightforward consequence of this result is the uniqueness of the injective envelope. More precisely, if  $(R,\iota)$ and $(E,j)$ are two injective envelopes of $S$, then  there is a unital surjective completely isometric map $\theta:R\to E$ such that $\theta\circ \iota=j$. We now turn to the question of existence of the injective envelope.

Choose a Hilbert space $H$ on which $S$ is concretely represented, so that we may view $S$ as an operator system in $B(H)$. A unital completely positive map $\theta:B(H)\to B(H)$ is called an \emph{$S$-map} if $\theta|_S=\id$. If, in addition, $\theta$ is idempotent (i.e.  $\theta\circ \theta=\theta$), then $\theta$ is called an \emph{$S$-projection}. We shall  make use of the standard  partial order on the set of all $S$-projections, defined as follows: $\psi\prec\theta$ precisely when $\psi\circ \theta=\theta\circ \psi=\psi$. The following is one of Hamana's fundamental ideas \cite{hamana1979}.

\begin{theorem}\label{T:Hamana}
Let $S\subset B(H)$ be an operator system and let $i:S\to B(H)$ denote the inclusion. Let $\theta:B(H)\to B(H)$ be an $S$-projection. Then, $\theta$ is minimal if and only if $( \ran \theta, i)$ is an injective envelope of $S$.   
\end{theorem}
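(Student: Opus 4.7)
The plan is to prove the two implications separately. The converse direction follows quickly from Proposition~\ref{P:injenvchar}, while the forward direction hinges on a Cesaro averaging / BW-compactness argument that is the main technical obstacle.

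For the converse, suppose $(R, i)$ is an injective envelope of $S$, where $R := \ran\theta$, and let $\psi$ be an $S$-projection with $\psi \prec \theta$. The relation $\theta\psi = \psi$ forces $\ran\psi \subseteq R$, while $\psi\theta = \psi$ forces $\psi = (\psi|_R)\circ\theta$. The restriction $\psi|_R : R \to R$ is UCP with $\psi|_R|_S = \id_S$, so rigidity (Proposition~\ref{P:injenvchar}(ii)) yields $\psi|_R = \id_R$, whence $\psi = \theta$; thus $\theta$ is minimal.

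For the forward direction, assume $\theta$ is minimal. Then $R := \ran\theta$ is injective as the range of a UCP idempotent on $B(H)$, so by Proposition~\ref{P:injenvchar} it remains to establish rigidity. Let $\phi : R \to R$ be UCP with $\phi|_S = \id_S$; we must show $\phi = \id_R$. Injectivity of $R$ lets us extend $\phi$ to a UCP map $\Phi : B(H) \to R$; set $\tau := \Phi \circ \theta$. A direct verification using $\theta^2 = \theta$ and $\theta|_R = \id_R$ shows that $\tau$ is UCP, $\tau|_S = \id_S$, $\ran\tau \subseteq R$, and $\theta\tau = \tau\theta = \tau$, so $\tau$ mimics an $S$-projection satisfying $\tau \prec \theta$, except that $\tau$ need not be idempotent (that would amount to $\phi^2 = \phi$, which is not given).

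To extract an actual $S$-projection, the plan is to consider the Cesaro averages $\tau_n := \frac{1}{n}\sum_{k=1}^n \tau^k$, which commute with each other and continue to satisfy the same relations with $\theta$. By BW-compactness of the UCP maps on $B(H)$, choose a BW-cluster point $\rho$ of $(\tau_n)$. Using the norm estimate $\|\tau_n\tau - \tau_n\| = O(1/n)$ coming from the telescoping identity $\tau\tau_n - \tau_n = \tfrac{1}{n}(\tau^{n+1} - \tau)$, combined with the BW-continuity of left-factor composition (the fact that $\sigma \mapsto \sigma\psi$ is BW-continuous for any fixed $\psi$) and the commutativity relations between the $\tau_n$, one argues that $\rho$ is idempotent and still satisfies $\theta\rho = \rho\theta = \rho$, so it is a genuine $S$-projection with $\rho \prec \theta$. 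Minimality of $\theta$ then forces $\rho = \theta$, and the analogous argument yields $\rho\tau = \rho$, i.e.\ $\theta\tau = \theta$. Since $\theta\tau = \theta\Phi\theta = \Phi\theta$ (because $\Phi$ has range in $R$, so $\theta\Phi = \Phi$), we conclude $\Phi\theta = \theta$. Evaluating at $r \in R$ then gives $\phi(r) = \Phi(r) = \theta(r) = r$, completing the proof of rigidity. The main obstacle is precisely this BW-compactness step: composition of UCP maps is BW-continuous only in the left factor with the right factor fixed (continuity in the right factor would require normality, which Arveson's extension does not guarantee), so the derivation of idempotence of $\rho$ must be assembled carefully via the norm estimate and commutativity trick, rather than by a naive passage to limits on both sides. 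This is the standard fixed-point / averaging technique underlying Hamana's original construction of the injective envelope.
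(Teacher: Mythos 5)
Your backward implication (range an injective envelope $\Rightarrow$ $\theta$ minimal) is correct and is the standard rigidity argument; the paper itself disposes of the whole theorem by citing Proposition \ref{P:injenvchar} together with \cite[Theorem 2.9]{paulsen2011}. The problem is your forward implication, and the gap sits exactly where you locate the ``main technical obstacle'': the claim that a BW-cluster point $\rho$ of the Ces\`aro averages $\tau_n$ of $\tau=\Phi\circ\theta$ is an idempotent satisfying $\theta\circ\rho=\rho\circ\theta=\rho$. The tools you permit yourself --- the norm estimate $\|\tau_n\circ\tau-\tau_n\|=O(1/n)$, BW-continuity of $\sigma\mapsto\sigma\circ\psi$ for a \emph{fixed} right factor $\psi$, and commutativity of the $\tau_n$ --- yield only $\rho\circ\tau=\rho$ and $\rho\circ\theta=\rho$. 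They give neither $\theta\circ\rho=\rho$ nor $\rho\circ\rho=\rho$: for the former you would need $\theta$ to be weak*-continuous or $\ran\theta$ to be weak*-closed (each $\tau_n(x)$ lies in $\ran\theta$, but a weak* limit of such elements need not), and for idempotence you must pass to a limit in the \emph{right}-hand factor of a composition (e.g.\ $\rho\circ\tau_{n_\alpha}\to\rho\circ\rho$), which requires normality of $\rho$; commuting the $\tau_n$'s does not repair this, since interchanging the two iterated limits is precisely the joint continuity that is absent, and maps produced by Arveson extension are not weak*-continuous. Even an Ellis-idempotent argument stalls at the same point, because without normality of $\tau$ the BW-closure of $\{\tau^k\}$ is not obviously a semigroup.

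This is not merely a presentational issue. In Hamana's construction the Ces\`aro cluster point is formed from a map whose associated $S$-seminorm $x\mapsto\|\phi(x)\|$ is \emph{minimal}, and idempotence is extracted from the consequence of seminorm-minimality that every $S$-map is isometric on the range (for instance $\|\theta(x)-\theta^2(x)\|=\|\sigma_n(\theta(x)-\theta^2(x))\|\le 2\|x\|/n$, where $\sigma_n$ are the Ces\`aro means of $\theta$). In your situation you only know minimality of $\theta$ for the order $\prec$ on projections, which provides no such isometric statement --- indeed ``every $S$-map is isometric on $\ran\theta$'' is essentially the essentiality you are trying to prove, so assuming it would be circular. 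This is exactly why the equivalence ``minimal $S$-projection $\Leftrightarrow$ range is an injective envelope'' is the content of \cite[Theorem 2.9]{paulsen2011} rather than a formal consequence of the averaging trick. A correct way to finish your direction is to invoke the existence theory \emph{inside} the injective system $R=\ran\theta$: Hamana's construction relative to $R$ produces a unital completely positive idempotent $\pi:R\to R$ fixing $S$ whose range is an injective envelope of $S$; then $\omega:=\pi\circ\theta$ is an $S$-projection with $\omega\prec\theta$ (one checks $\omega^2=\pi(\theta\pi\theta)=\pi^2\theta=\omega$ and $\theta\omega=\omega\theta=\omega$ using $\theta|_R=\id$), so minimality forces $\omega=\theta$ and $\ran\theta=\ran\pi$ is an injective envelope. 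Note, however, that this route still consumes the minimal-seminorm machinery; it cannot be replaced by directly averaging $\tau$ as you propose.
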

\begin{proof}
This follows from Proposition \ref{P:injenvchar} along with \cite[Theorem 2.9]{paulsen2011}.
\end{proof}

In light of this, we see that the existence of injective envelopes reduces to the existence of minimal projections. The following result insures that these always exist.

\begin{theorem}\label{T:minproj}
Let $S\subset B(H)$ be an operator system.  Let $\psi:B(H)\to B(H)$ be an $S$-projection. Then, there is a minimal $S$-projection $\theta:B(H)\to B(H)$ with $\ran\theta\subset \ran \psi$.
\end{theorem}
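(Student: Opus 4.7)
The plan is to apply Zorn's lemma to the poset
\[
\mathcal{P}_\psi = \{\theta : \theta \text{ is an } S\text{-projection on } B(H) \text{ with } \theta\prec\psi\},
\]
equipped with the partial order $\prec$ of the paper. This set is nonempty, since $\psi\in\mathcal{P}_\psi$, and the relation $\theta\prec\psi$ directly implies $\ran\theta\subset\ran\psi$; so any minimal element of $\mathcal{P}_\psi$ immediately satisfies the desired conclusion.

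The substantive task is therefore to verify that every chain $\{\theta_\alpha\}_\alpha$ in $\mathcal{P}_\psi$ admits a $\prec$-lower bound. I would construct one as a cluster point in a suitable compact topology. The set of unital completely positive maps $B(H)\to B(H)$ is compact in the point-ultraweak topology, by Banach--Alaoglu applied to the predual $B(H)_*$ together with the closure of the ucp property under such limits. Directing the chain so that it descends in $\prec$, a subnet then converges pointwise ultraweakly to some ucp map $\theta:B(H)\to B(H)$, and $\theta$ is automatically an $S$-map since each $\theta_\alpha$ is.

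To confirm that $\theta$ is the desired lower bound, one must check the three identities $\theta\circ\theta_\alpha=\theta$, $\theta_\alpha\circ\theta=\theta$, and $\theta\circ\theta=\theta$. The first is immediate: for $\beta$ far below $\alpha$ in the chain one has $\theta_\beta\circ\theta_\alpha=\theta_\beta$, and holding $y=\theta_\alpha(x)$ fixed while $\beta$ varies, the defining convergence $\theta_\beta(y)\to\theta(y)$ forces $\theta(\theta_\alpha(x))=\theta(x)$. The hard part will be the remaining two identities, and the main obstacle is that a generic ucp map---in particular the fixed $\theta_\alpha$---need not be ultraweakly continuous, so one cannot na\"ively commute limits with compositions. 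I would overcome this by exploiting that $\theta_\beta(x)\in\ran\theta_\alpha$ for $\beta$ past $\alpha$, whence $\theta(x)$ lies in the ultraweak closure of $\ran\theta_\alpha$; a Hamana-style argument, leveraging the idempotence of $\theta_\alpha$ and the special structure of $\ran\theta_\alpha$ as an injective operator system, should then deliver $\theta_\alpha(\theta(x))=\theta(x)$ and, by a parallel route, $\theta(\theta(x))=\theta(x)$.

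Once chains in $\mathcal{P}_\psi$ are shown to admit lower bounds, Zorn's lemma produces a minimal element $\theta\in\mathcal{P}_\psi$, which is a minimal $S$-projection on $B(H)$ whose range sits inside $\ran\psi$, as required.
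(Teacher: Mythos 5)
Your reduction to Zorn's lemma hinges on the claim that every $\prec$-chain $\{\theta_\alpha\}$ in $\mathcal{P}_\psi$ has a lower bound, and that is exactly where the argument has a genuine gap. The easy identity $\theta\circ\theta_\alpha=\theta$ is fine, but your proposed route to $\theta_\alpha\circ\theta=\theta$ does not work: from $\theta_\beta(x)\in\ran\theta_\alpha$ for $\beta$ past $\alpha$ you only get that $\theta(x)$ lies in the ultraweak closure of $\ran\theta_\alpha$, and $\theta_\alpha$ has no reason to fix points of that closure. Indeed, the fixed-point set of the idempotent $\theta_\alpha$ is precisely $\ran\theta_\alpha$, which is norm closed but in general \emph{not} ultraweakly closed; if $\theta_\alpha$ fixed every point of the ultraweak closure, that closure would be contained in $\ran\theta_\alpha$, i.e.\ the range would be ultraweakly closed, which fails for general injective operator systems. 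The idempotence $\theta\circ\theta=\theta$ (dismissed as ``a parallel route'') suffers from the same continuity obstruction: point-ultraweak limits of ucp idempotents need not be idempotent. This is not a technicality one can wave at with ``a Hamana-style argument'': it is precisely the difficulty that forces Hamana's existence proof to run Zorn's lemma over the $S$-seminorms $p_\phi(x)=\|\phi(x)\|$ (where chains trivially have lower bounds by weak* lower semicontinuity of the norm) rather than over $S$-projections, and then to manufacture an idempotent from a minimal-seminorm $S$-map via Ces\`aro means of iterates. Without a genuine substitute for these steps, your proof is incomplete at its core. A smaller point: even granting a minimal element $\theta$ of $\mathcal{P}_\psi$, you should observe that $\prec$ is transitive on $S$-projections, so that any $S$-projection $\omega\prec\theta$ automatically lies in $\mathcal{P}_\psi$ and hence $\theta$ is minimal among \emph{all} $S$-projections, not merely within $\mathcal{P}_\psi$; you assert this without comment.

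The paper avoids all of this by not relativizing the Zorn argument at all: it quotes Hamana's theorem for the existence of \emph{some} minimal $S$-projection $\theta_0$ on $B(H)$, and then sets $\theta=\psi\circ\theta_0$, which is again a minimal $S$-projection (this composition trick is \cite[Proposition 2.11]{paulsen2011}) and visibly has $\ran\theta\subset\ran\psi$. If you want to salvage your approach, either invoke Hamana's existence theorem and add this two-line composition, or redo the existence proof properly via minimal $S$-seminorms; the direct chain-of-projections argument, as written, does not close.
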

\begin{proof}
By \cite[Theorem 3.5]{hamana1979}, there is a minimal $S$-projection $\theta_0:B(H)\to B(H)$. If we put $\theta=\psi\circ \theta_0$, then by \cite[Proposition 2.11]{paulsen2011} it follows that $\theta$ is a minimal $S$-projection as well, and by construction its range is contained in the range of $\psi$.
\end{proof}

\subsection{Choi--Effros products and the $\rC^*$-envelope}
Fix a Hilbert space $H$ and let $\theta:B(H)\to B(H)$ be a unital completely positive idempotent map. Let $R\subset B(H)$ denote the range of $\theta$, which is then an injective operator system. The \emph{Choi--Effros product} $*_\theta:R\times R\to R$  is defined as
\[
s *_\theta t=\theta(st), \quad s,t\in R.
\]
Equipped with this product, $R$ becomes a unital $\rC^*$-algebra that we  denote by  $\CE(R,\theta)$. The inclusion map $\alpha_\theta: R\to \CE(R,\theta)$ is unital, surjective and completely isometric \cite[Theorem 3.1]{CE1977}. Throughout our work, it will be important to distinguish between $R$ and $\CE(R,\theta)$, which will be accomplished by keeping track of the map $\alpha_\theta$. Given an operator system  $X\subset R$, we denote by $\CE(X,\theta)$ the $\rC^*$-subalgebra generated by $\alpha_\theta(X)$ inside of $\CE(R,\theta)$.

Since $\theta$ is idempotent and hence acts as the identity on its range $R$, we see that 
\[
\theta(st)=\theta(\theta(s)\theta(t))=\theta(s)*_\theta \theta(t), \quad s,t\in R.
\]
Thus,  $\alpha_\theta\circ \theta:\rC^*(R)\to \CE(R,\theta)$ is a surjective $*$-homomorphism. In fact, much more is true as we shall see below in Proposition \ref{P:minuep}.

Next, let $S\subset B(H)$ be an arbitrary operator system. Recall from the introduction that a \emph{$\rC^*$-extension} of $S$ is a pair $(A,j)$ consisting of a unital $\rC^*$-algebra $A$ along with a unital completely isometric map $j:S\to A$ such that $A=\rC^*(j(S))$. A $\rC^*$-extension $(A,j)$ is a \emph{$\rC^*$-envelope} of $S$ if it satisfies the following universal property: given another $\rC^*$-extension $(B,\iota)$, there must exist a surjective unital $*$-homomorphism $\pi:B\to A$ such that $\pi\circ \iota=j$.  We then say that $\ker\pi$ is the \emph{Shilov ideal} of $\iota(S)$ in $B$. It is easily seen that a $\rC^*$-envelope is essentially unique, and that the Shilov ideal is unique.
On the other hand, the existence question for these objects is highly non-trivial, and it was first resolved by Hamana  \cite[Theorem 4.1]{hamana1979}, as follows.

\begin{theorem}\label{T:Cstarenv}
Let $S\subset B(H)$ be an operator system. Let $\theta:B(H)\to B(H)$ be a minimal $S$-projection. Then, $(\CE(S,\theta),\alpha_\theta|_S)$ is a $\rC^*$-envelope of $S$.
\end{theorem}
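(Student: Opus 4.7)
The plan is to verify that $(\CE(S,\theta), \alpha_\theta|_S)$ satisfies the universal property of the $\rC^*$-envelope. First, this is a $\rC^*$-extension of $S$: the map $\alpha_\theta|_S$ inherits unitality and complete isometry from $\alpha_\theta : R \to \CE(R,\theta)$, and $\CE(S,\theta) = \rC^*(\alpha_\theta(S))$ by definition.

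Given any other $\rC^*$-extension $(B,\iota)$, I aim to construct a surjective unital $*$-homomorphism $\pi : B \to \CE(S,\theta)$ with $\pi \circ \iota = \alpha_\theta|_S$. I embed $B$ faithfully into $B(K)$ for some Hilbert space $K$. Applying Theorem~\ref{T:minproj} to the $\iota(S)$-projection $\id_{B(K)}$, I obtain a minimal $\iota(S)$-projection $\theta_K : B(K) \to B(K)$. By Theorem~\ref{T:Hamana}, the range $R_K := \ran\theta_K$, together with the inclusion of $\iota(S)$, gives an injective envelope of $\iota(S)$, which is to say that $(R_K, \iota)$ is an injective envelope of $S$. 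The uniqueness of the injective envelope, a consequence of Proposition~\ref{P:injenvchar}, then furnishes a unital surjective complete isometry $\Lambda : R_K \to R$ satisfying $\Lambda(\iota(s)) = s$ for every $s \in S$.

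The argument combines two key observations. First, as noted in the discussion preceding the theorem, $\alpha_{\theta_K} \circ \theta_K : \rC^*(R_K) \to \CE(R_K,\theta_K)$ is a surjective $*$-homomorphism. Since $\iota(S) \subset R_K$, we have $B = \rC^*(\iota(S)) \subset \rC^*(R_K)$, and restriction yields a $*$-homomorphism $\pi_0 : B \to \CE(R_K,\theta_K)$ whose image equals $\CE(\iota(S),\theta_K)$. Second, $\Lambda$, viewed as a map between the unital $\rC^*$-algebras $\CE(R_K,\theta_K)$ and $\CE(R,\theta)$ (same underlying sets, different products), is a unital completely isometric bijection, and hence a $*$-isomorphism by the classical fact that such maps between unital $\rC^*$-algebras are automatically multiplicative. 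Setting $\pi = \Lambda \circ \pi_0$ and using that $\alpha_{\theta_K}$ and $\alpha_\theta$ are identity on underlying sets, a short computation gives
\[
\pi(\iota(s)) = \Lambda(\alpha_{\theta_K}(\theta_K(\iota(s)))) = \Lambda(\iota(s)) = s = \alpha_\theta|_S(s),
\]
and $\pi$ is surjective onto $\CE(S,\theta)$ by tracking its image.

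The subtlest point in the argument is keeping track of the dual roles of $\Lambda$: as a unital complete isometry of operator systems (to invoke uniqueness of the injective envelope) and as a $*$-isomorphism of $\rC^*$-algebras (to ensure multiplicativity of the composite $\pi$). Once these identifications are managed carefully, the entire construction reduces to routine manipulation of the Choi--Effros product.
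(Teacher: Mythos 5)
Your argument is correct, and it is worth noting that the paper itself gives no in-text proof of this statement: it is quoted as Hamana's theorem, with a citation to \cite{hamana1979}. What you have written is essentially a reconstruction of Hamana's original argument using only tools already available in the paper (Theorems \ref{T:Hamana} and \ref{T:minproj}, the uniqueness of injective envelopes following Proposition \ref{P:injenvchar}, the fact that $\alpha_{\theta_K}\circ\theta_K$ is a $*$-homomorphism on $\rC^*(R_K)$, and the automatic multiplicativity of unital surjective complete isometries between unital $\rC^*$-algebras, which in the paper is extracted from Proposition \ref{P:CEtrik}), and none of these ingredients depend on Theorem \ref{T:Cstarenv}, so there is no circularity. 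Two small points deserve explicit mention: the embedding $B\subset B(K)$ should be taken unital so that $\iota(S)$-projections make sense, and you should say where minimality of $\theta$ itself is used, namely that Theorem \ref{T:Hamana} makes $(\ran\theta, i)$ an injective envelope of $S$, which is needed before you can invoke uniqueness to produce $\Lambda$. For comparison, a slightly shorter route that stays closer to the paper's own technique (mimicking the proof of Proposition \ref{P:minuep}) avoids building a second injective envelope on $B(K)$: extend $\iota^{-1}:\iota(S)\to S\subset \ran\theta$ to a unital completely positive map $\Phi:B(K)\to\ran\theta$ by injectivity, extend $\iota|_S$ to a unital completely positive map $\psi:\ran\theta\to B(K)$ by Arveson, use rigidity to get $\Phi\circ\psi=\id$ on $\ran\theta$, and then Proposition \ref{P:CEtrik} applied to $\gamma=\psi\circ\alpha_\theta^{-1}$ shows that $\alpha_\theta\circ\Phi$ restricts to a $*$-homomorphism on $\rC^*(\psi(\ran\theta))\supset B$ carrying $\iota(s)$ to $\alpha_\theta(s)$, with image $\CE(S,\theta)$. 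Your route via uniqueness of the injective envelope buys a more symmetric picture of the two Choi--Effros algebras; the alternative buys brevity. Either way, the proof stands.
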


In the remainder of this section we collect some further technical facts about Choi--Effros products and their corresponding $\rC^*$-algebras that will be of use later.
We start with a basic observation, found in \cite[Theorem 4.1]{CE1976}.

\begin{proposition}\label{P:CEtrik}
Let $A$ and $B$ be unital $\rC^*$-algebras and let $\gamma:A\to B$ be a unital completely isometric map. Then, the following statements hold.
\begin{enumerate}[{\rm (i)}]
\item There is a unital $*$-homomorphism $\eta:\rC^*(\gamma(A))\to A$ such that $\eta\circ \gamma=\id$.
\item If  $\psi:\rC^*(\gamma(A))\to A$ is a unital completely positive map satisfing $\psi\circ \gamma=\id$ on $A$, then $\psi=\eta$. 
\end{enumerate} 
\end{proposition}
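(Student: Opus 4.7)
The plan is to obtain (i) and (ii) as twin consequences of a single multiplicative domain observation: if two unital completely positive maps $\phi:X\to Y$ and $\psi:Y\to X$ between unital $\rC^*$-algebras satisfy $\psi\circ \phi=\id_X$, then the image of $\phi$ lies inside the multiplicative domain of $\psi$. Once this is in hand, any unital completely positive left inverse of $\gamma$ becomes automatically multiplicative on $\rC^*(\gamma(A))$; at that point (ii) is immediate, since two unital $*$-homomorphisms that agree on the generating subset $\gamma(A)$ must coincide, and (i) reduces to producing at least one such unital completely positive left inverse.

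To establish the multiplicative domain claim, I would use two successive applications of the Kadison--Schwarz inequality. For $x\in X$, the chain
\[
x^*x=\psi(\phi(x^*x))\geq \psi(\phi(x)^*\phi(x))\geq \psi(\phi(x))^*\psi(\phi(x))=x^*x
\]
forces the middle inequality to be an equality, and the analogous chain starting from $xx^*$ takes care of the other half of the multiplicative domain condition. From the standard characterization of the multiplicative domain, this places $\phi(X)$ inside it and hence makes $\psi$ multiplicative on $\rC^*(\phi(X))$.

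For the existence assertion in (i), the strategy is to fix a faithful unital representation $A\subset B(K)$ on some Hilbert space $K$, and then to apply Arveson's extension theorem to the unital completely positive map $\gamma^{-1}:\gamma(A)\to A\subset B(K)$, viewing $\gamma(A)$ as an operator subsystem of $\rC^*(\gamma(A))\subset B$. This yields a unital completely positive map $\td\eta:\rC^*(\gamma(A))\to B(K)$ extending $\gamma^{-1}$. Applying the multiplicative domain observation with $\phi=\gamma$ and $\psi=\td\eta$, I conclude that $\td\eta$ is a unital $*$-homomorphism on $\rC^*(\gamma(A))$. Its image is then the $\rC^*$-subalgebra of $B(K)$ generated by $\td\eta(\gamma(A))=A$, which is simply $A$ itself. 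Corestricting yields the required $\eta:\rC^*(\gamma(A))\to A$.

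The main obstacle I anticipate is purely at the multiplicative domain step: one must cleanly extract from the Kadison--Schwarz equalities above the genuine multiplicativity $\td\eta(\phi(x)y)=\td\eta(\phi(x))\td\eta(y)$ for arbitrary $y\in \rC^*(\gamma(A))$, rather than only the weaker equality $\td\eta(\phi(x)^*\phi(x))=\td\eta(\phi(x))^*\td\eta(\phi(x))$ that the computation directly produces. Beyond this well-known bookkeeping point, the remainder of the argument is essentially formal and merely assembles the pieces described above.
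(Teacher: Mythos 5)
Your argument is correct. Note, however, that the paper does not prove this proposition at all: it is quoted as a known fact, with a citation to Choi--Effros \cite[Theorem 4.1]{CE1976}, so there is no in-paper proof to compare against; what you have written is essentially the standard proof of that cited result. Your key observation --- that a unital completely positive left inverse $\psi$ of a ucp map $\phi$ satisfies $\psi(\phi(x)^*\phi(x))=\psi(\phi(x))^*\psi(\phi(x))$ and its adjoint analogue by squeezing between two Schwarz inequalities --- is exactly the multiplicative-domain trick, and the ``bookkeeping'' step you flag is precisely Choi's theorem that equality in the Schwarz inequality for $y$ and $y^*$ places $y$ in the multiplicative domain of $\psi$, which is a $\rC^*$-subalgebra on which $\psi$ is a $*$-homomorphism and which therefore contains $\rC^*(\gamma(A))$ once it contains $\gamma(A)$; this is standard and safe to invoke. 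Two small points worth making explicit: (1) you need $\gamma$ (and $\gamma^{-1}$ on $\gamma(A)$) to be completely positive in order to apply the Schwarz inequality and Arveson extension, which holds because a unital complete isometry between operator systems is a complete order isomorphism onto its range, hence ucp; (2) in part (i), after Arveson extension you apply the multiplicative-domain claim with $\psi=\td\eta$ taking values in $B(K)$ rather than in $A$, which is fine since the computation only uses positivity and the Schwarz inequality for $\td\eta$, and multiplicativity then forces $\td\eta(\rC^*(\gamma(A)))=\rC^*(\td\eta(\gamma(A)))=A$, justifying the corestriction. With those remarks your proof is complete and, modulo packaging, coincides with the argument behind the reference the authors cite.
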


We record an important uniqueness property for minimal $S$-projections.

\begin{proposition}\label{P:minuep}
Let $S\subset B(H)$ be an operator system and let $A=\rC^*(S)$.  Let $\theta:B(H)\to B(H)$ be a minimal $S$-projection with range $R$. Let $\pi:B(H)\to R$ be an $S$-map. Then, $\alpha_\theta\circ \pi|_{A}:A\to \CE(R,\theta)$ is a $*$-homomorphism. Moreover, $\pi$ agrees with $\theta$ on $R+A$.
\end{proposition}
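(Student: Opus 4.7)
My plan is to prove both claims simultaneously by upgrading $\pi$ to a minimal $S$-projection and then matching its Choi--Effros structure with that of $\theta$. The punchline will be that $\pi$ and $\theta$ agree on all of $\rC^*(R)$, which immediately yields both conclusions.

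First, I would apply rigidity. Since $(R,i)$ is an injective envelope of $S$ by Theorem \ref{T:Hamana}, and since $\pi|_R : R \to R$ is a u.c.p.\ map acting as the identity on $S$, Proposition \ref{P:injenvchar} forces $\pi|_R = \id_R$. Denoting by $\td\pi : B(H) \to B(H)$ the composition of $\pi$ with the inclusion $R \hookrightarrow B(H)$, I can read off three facts: $\td\pi$ is u.c.p., $\td\pi|_S = \id_S$, and $\td\pi$ is idempotent (since $\td\pi(B(H)) \subset R$ and $\td\pi|_R = \id_R$, using $\pi|_R = \id_R$). Thus $\td\pi$ is an $S$-projection with range equal to $R = \theta(B(H))$, and a second application of Theorem \ref{T:Hamana} upgrades $\td\pi$ to a \emph{minimal} $S$-projection in its own right.

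The crux of the argument is then to show that the two Choi--Effros products $*_\theta$ and $*_{\td\pi}$ on $R$ coincide. To this end, Theorem \ref{T:Cstarenv} gives that both $(\CE(R,\theta),\alpha_\theta|_S)$ and $(\CE(R,\td\pi),\alpha_{\td\pi}|_S)$ are $\rC^*$-envelopes of $S$, so uniqueness of the $\rC^*$-envelope produces a $*$-isomorphism $\gamma : \CE(R,\theta) \to \CE(R,\td\pi)$ intertwining the embeddings of $S$. Since $\alpha_\theta$ and $\alpha_{\td\pi}$ are the identity at the set-theoretic level, $\gamma$ can be viewed as a set-theoretic map $R \to R$ that fixes $S$. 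The operator system structures on both Choi--Effros algebras coincide with the structure $R$ inherits from $B(H)$, because $\alpha_\theta$ and $\alpha_{\td\pi}$ are u.c.i.\ bijections; hence $\gamma$ is u.c.p.\ as a map of operator systems, and rigidity of $R$ forces $\gamma = \id_R$. Consequently $*_\theta = *_{\td\pi}$, or equivalently $\theta(r_1 r_2) = \pi(r_1 r_2)$ for every $r_1, r_2 \in R$. This comparison of Choi--Effros products is the step I expect to be the main obstacle, since it demands a careful juxtaposition of the various identifications of $R$ as an operator system, as an injective envelope, and as two potentially distinct $\rC^*$-algebras.

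Finally I would conclude as follows. The fact recorded in the excerpt just before the proposition yields that $\alpha_\theta \circ \theta|_{\rC^*(R)}$ and $\alpha_{\td\pi} \circ \td\pi|_{\rC^*(R)}$ are both surjective $*$-homomorphisms; after the identification of the Choi--Effros products, the latter is a $*$-homomorphism $\rC^*(R) \to \CE(R,\theta)$. Both restrict to $\alpha_\theta$ on $R$ because $\theta|_R = \td\pi|_R = \id_R$, and since $R$ generates $\rC^*(R)$ as a $\rC^*$-algebra, the two $*$-homomorphisms agree on all of $\rC^*(R)$. Applying $\alpha_\theta^{-1}$ then gives $\theta = \pi$ on $\rC^*(R)$. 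Since $R+A \subset \rC^*(R)$, this proves the second assertion, and then the first assertion follows immediately because $\alpha_\theta \circ \pi|_A = \alpha_\theta \circ \theta|_A$ is the restriction to $A$ of a $*$-homomorphism.
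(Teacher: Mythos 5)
Your opening and closing steps are fine: rigidity does give $\pi|_R=\id_R$, the map $\td\pi$ (i.e.\ $\pi$ viewed as a map into $B(H)$) is indeed an $S$-projection with range $R$ and hence minimal by Theorem \ref{T:Hamana}, and \emph{if} one knows $*_\theta=*_{\td\pi}$ then your comparison of the two $*$-homomorphisms $\alpha_\theta\circ\theta$ and $\alpha_{\td\pi}\circ\td\pi$ on $\rC^*(R)$ correctly yields $\pi=\theta$ on $\rC^*(R)\supset R+A$ and hence both assertions (in fact slightly more than the paper states). The genuine gap is in your justification of the crux. Theorem \ref{T:Cstarenv} does \emph{not} say that $(\CE(R,\theta),\alpha_\theta|_S)$ is a $\rC^*$-envelope of $S$; it says this for $\CE(S,\theta)$, the $\rC^*$-subalgebra generated by $\alpha_\theta(S)$, which is in general a proper subalgebra of $\CE(R,\theta)$ (for instance, when the $\rC^*$-envelope of $S$ is not injective, as for $S=\spn\{1,z,\bar z\}\subset \rC(\bT)$). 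Since $\CE(R,\theta)$ need not be generated by $\alpha_\theta(S)$, it is not even a $\rC^*$-extension of $S$, so uniqueness of the $\rC^*$-envelope produces no isomorphism $\gamma$ defined on all of $\CE(R,\theta)$. The isomorphism it does produce, $\CE(S,\theta)\to\CE(S,\td\pi)$, is set-theoretically a map from $\theta(A)$ onto $\pi(A)$, and you cannot regard it as a self-map of $R$ fixing $S$ and then invoke rigidity: the two subsets $\theta(A)$ and $\pi(A)$ of $R$ are not known to coincide at this stage -- their agreement is essentially what the proposition is asserting.

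The good news is that the crux is true and can be repaired with the paper's own Proposition \ref{P:CEtrik}, making the envelope and rigidity arguments in that paragraph unnecessary: the identity map of $R$, viewed as $\alpha_{\td\pi}\circ\alpha_\theta^{-1}:\CE(R,\theta)\to\CE(R,\td\pi)$, is a unital, surjective, completely isometric map between unital $\rC^*$-algebras, hence a $*$-isomorphism by Proposition \ref{P:CEtrik}; since it is the identity set-theoretically, this says precisely that $\theta(st)=\td\pi(st)$ for all $s,t\in R$, i.e.\ $*_\theta=*_{\td\pi}$. With this substitution your argument goes through, and it is genuinely different from the paper's proof, which never compares two Choi--Effros products on $R$: there, one extends $\id_S$ to a u.c.p.\ map $\psi:R\to B(H)$, uses rigidity to get $\pi\circ\psi=\id$ on $R$, applies Proposition \ref{P:CEtrik} with $\gamma=\psi\circ\alpha_\theta^{-1}$ to conclude that $\alpha_\theta\circ\pi$ is a $*$-homomorphism on $\rC^*(\psi(R))\supset A$, and then matches it with $\alpha_\theta\circ\theta|_A$ on the generating set $S$. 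As written, however, your proposal's key step rests on a misapplication of Theorem \ref{T:Cstarenv} and does not stand.
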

\begin{proof}
Let $i:S\to B(H)$ denote the inclusion. By Proposition \ref{P:injenvchar} and Theorem \ref{T:Hamana}, $(R,i )$ is a rigid extension of $S$. Since $\pi|_S=\id$, we infer by rigidity that $\pi|_R=\id$. In particular, $\pi$ and $\theta$ agree on $R$.

Next, let $\psi:R\to B(H)$ be a unital completely positive extension of the identity on $S$, so that $\psi|_S=\id$. We find $\pi\circ \psi|_S=\id$, so once again by rigidity we infer $\pi\circ \psi=\id$ on $R$. In particular, $\psi$ is completely isometric and $\alpha_\theta\circ \pi\circ \psi\circ \alpha_\theta^{-1}=\id$ on $\CE(R,\theta)$. By applying Proposition \ref{P:CEtrik} with $\gamma=\psi\circ\alpha_\theta^{-1}:\CE(R,\theta)\to B(H)$, we find that $\alpha_\theta\circ \pi$ restricts to a $*$-homomorphism on the $\rC^*$-subalgebra of $B(H)$ generated by $\psi(R)$, which in particular contains $A$.  

Applying the argument of the previous paragraph with $\theta$ instead of $\pi$,  we see that $\alpha_\theta\circ \theta|_A $ is a $*$-homomorphism. Finally, since  $\alpha_\theta\circ \theta|_A $ and $\alpha_\theta\circ \pi|_A$ are $*$-homomorphisms agreeing on $S$, they must agree everywhere. Using that $\alpha_\theta$ is completely isometric on $R$,  it follows that $\theta$ and $\pi$ agree on $A$ as well.
\end{proof}

Next, we establish a crucial technical tool, which allows for the comparison of various Choi--Effros products.

\begin{proposition}\label{P:CEcomp}
Let $S\subset B(H)$ be an operator system.  Let $\psi:B(H)\to B(H)$ be an $S$-projection and let $\theta:B(H)\to B(H)$ be a minimal $S$-projection. Then, the following statements hold.
\begin{enumerate}[{\rm(i)}]
\item The map \[\pi=\alpha_\theta\circ \theta\circ \alpha_\psi^{-1}|_{\CE(S,\psi)}: \CE(S,\psi)\to \CE(S,\theta)\] is a unital surjective $*$-homomorphism whose kernel is the Shilov ideal of $\alpha_\psi(S)$ in $\CE(S,\psi)$.
\item  If $\alpha_\psi^{-1}(\CE(S,\psi))$ is contained in the range of $\theta$, then \[\alpha_\psi^{-1}(\CE(S,\psi))=\alpha_\theta^{-1}( \CE(S,\theta)).\]
\item Assume that there is a unital $*$-homomorphism $\sigma:\CE(S,\theta)\to \CE(S,\psi)$ such that $\sigma\circ\alpha_\theta|_S=\alpha_\psi|_S$. Then, there is a minimal $S$-projection $\theta':B(H)\to B(H)$ such that $\alpha_{\theta'}^{-1}(\CE(S,\theta'))=\alpha_\psi^{-1}(\CE(S,\psi))$. 
\end{enumerate}
\end{proposition}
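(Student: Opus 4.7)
The strategy is to treat the three parts in order, each building on the previous and combining the universal property of the $\rC^*$-envelope with Proposition \ref{P:minuep}.

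For part (i), by Theorem \ref{T:Cstarenv}, $(\CE(S,\theta), \alpha_\theta|_S)$ is a $\rC^*$-envelope of $S$ and $(\CE(S,\psi), \alpha_\psi|_S)$ is a $\rC^*$-extension, so the universal property yields a unique surjective unital $*$-homomorphism $\pi_0:\CE(S,\psi)\to \CE(S,\theta)$ with $\pi_0\circ\alpha_\psi|_S=\alpha_\theta|_S$, whose kernel is the Shilov ideal by definition. It suffices to show $\pi=\pi_0$. The first ingredient is the standard Choi--Effros fact noted just before Proposition \ref{P:minuep}: $\alpha_\psi\circ\psi:\rC^*(\ran\psi)\to\CE(\ran\psi,\psi)$ is a $*$-homomorphism, so restricting to $A$ gives a $*$-homomorphism $\alpha_\psi\circ\psi|_A:A\to\CE(\ran\psi,\psi)$ whose image is the $\rC^*$-subalgebra of $\CE(\ran\psi,\psi)$ generated by $\alpha_\psi(S)$, namely $\CE(S,\psi)$. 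In particular, $\alpha_\psi(\psi(A))=\CE(S,\psi)$, so it suffices to verify $\pi=\pi_0$ on this set. Now $\pi_0\circ\alpha_\psi\circ\psi|_A$ and $\alpha_\theta\circ\theta|_A$ are both $*$-homomorphisms $A\to\CE(S,\theta)$ (the second by Proposition \ref{P:minuep} applied with the $S$-map $\theta$) that agree on $S$, hence on $A=\rC^*(S)$. A further application of Proposition \ref{P:minuep}, now with the $S$-map $\theta\circ\psi$, gives $\theta\circ\psi=\theta$ on $R+A$; combining these facts yields $\pi(\alpha_\psi(\psi(a)))=\alpha_\theta(\theta(\psi(a)))=\alpha_\theta(\theta(a))=\pi_0(\alpha_\psi(\psi(a)))$ for every $a\in A$, completing (i).

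Part (ii) is now essentially immediate: the hypothesis $\alpha_\psi^{-1}(\CE(S,\psi))\subset\ran\theta$ forces $\theta\circ\alpha_\psi^{-1}=\alpha_\psi^{-1}$ on $\CE(S,\psi)$, so $\pi=\alpha_\theta\circ\alpha_\psi^{-1}$ there. By (i), $\pi$ surjects onto $\CE(S,\theta)$, so $\alpha_\theta(\alpha_\psi^{-1}(\CE(S,\psi)))=\CE(S,\theta)$, and applying $\alpha_\theta^{-1}$ gives the claimed equality.

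For part (iii), the plan is to transport an injective envelope of $S$ into $B(H)$ whose image contains $\alpha_\psi^{-1}(\CE(S,\psi))$, and then invoke (ii). First, since $\sigma(\CE(S,\theta))$ is a $\rC^*$-subalgebra of $\CE(S,\psi)$ containing $\alpha_\psi(S)$, it must equal $\CE(S,\psi)$; hence $\sigma$ is surjective. Consequently $\phi:=\alpha_\psi^{-1}\circ\sigma:\CE(S,\theta)\to B(H)$ is unital completely positive with $\phi(\CE(S,\theta))=\alpha_\psi^{-1}(\CE(S,\psi))$ and $\phi(\alpha_\theta(s))=s$ for every $s\in S$. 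Arveson's extension theorem lifts $\phi$ to a unital completely positive map $\Phi:\CE(R,\theta)\to B(H)$, and $\Psi:=\Phi\circ\alpha_\theta:R\to B(H)$ is then a unital completely positive map acting as the inclusion on $S$. Essentiality of the injective envelope $R$ over $S$ (Proposition \ref{P:injenvchar}) forces $\Psi$ to be completely isometric, so $R':=\Psi(R)\subset B(H)$ is an operator system unital completely isometric to $R$, and hence itself an injective envelope of $S$. Injectivity of $R'$ lets us extend the identity on $R'$ to a unital completely positive idempotent $\theta':B(H)\to B(H)$ with $\ran\theta'=R'$; this $\theta'$ is a minimal $S$-projection by Theorem \ref{T:Hamana}. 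Since $\alpha_\psi^{-1}(\CE(S,\psi))=\Psi(\CE(S,\theta))\subset R'=\ran\theta'$, part (ii) applied to $\theta'$ yields $\alpha_{\theta'}^{-1}(\CE(S,\theta'))=\alpha_\psi^{-1}(\CE(S,\psi))$, as required.

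The most delicate step is the identification $\pi=\pi_0$ in (i): checking multiplicativity of $\alpha_\theta\circ\theta\circ\alpha_\psi^{-1}$ directly on Choi--Effros products would amount to comparing $\theta(\psi(bb'))$ with $\theta(\theta(b)\theta(b'))$, which is opaque. The trick of reducing the comparison to the $\rC^*$-subalgebra $A$ via the intermediate $*$-homomorphism $\alpha_\psi\circ\psi|_A$ is what makes Proposition \ref{P:minuep} applicable and closes the argument.
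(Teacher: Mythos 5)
Your proof is correct, and for parts (i) and (iii) it follows a genuinely different route than the paper, while your part (ii) is essentially the paper's argument. For (i), the paper never invokes the universal property of the $\rC^*$-envelope to produce the homomorphism: it constructs, by injectivity of $\ran\psi$, a unital completely positive map $\gamma:\ran\theta\to\ran\psi$ extending $\id_S$, uses rigidity to get $\theta\circ\gamma=\id$ on $\ran\theta$, and then applies Proposition \ref{P:CEtrik}(ii) to the complete isometry $\alpha_\psi\circ\gamma\circ\alpha_\theta^{-1}$ to see directly that $\alpha_\theta\circ\theta\circ\alpha_\psi^{-1}$ is multiplicative on $\CE(S,\psi)$; the Shilov-ideal identification then comes from Theorem \ref{T:Cstarenv} exactly as in your argument. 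Your alternative -- defining $\pi_0$ abstractly and proving $\pi=\pi_0$ by writing every element of $\CE(S,\psi)$ as $\alpha_\psi(\psi(a))$, $a\in \rC^*(S)$, and applying Proposition \ref{P:minuep} to the $S$-map $\theta\circ\psi$ -- is sound (the needed facts, that $\alpha_\psi\circ\psi$ is a $*$-homomorphism on $\rC^*(\ran\psi)$ with $(\alpha_\psi\circ\psi)(\rC^*(S))=\CE(S,\psi)$, are exactly what the paper records before Proposition \ref{P:minuep} and again when establishing \eqref{Eq:exp}), and it neatly packages the kernel statement into the definition of the Shilov ideal. For (iii), the paper extends $\alpha_\psi^{-1}\circ\sigma\circ\alpha_\theta$ to an $S$-map $\Phi:B(H)\to\ran\psi$, sets $\theta'=\Phi\circ\theta$, quotes Paulsen's result for minimality, and then verifies $\theta'(t_1\cdots t_n)=\psi(t_1\cdots t_n)$ by an explicit computation using multiplicativity of $\sigma$; you instead transport the whole injective envelope, using essentiality to get a unital complete isometry $\Psi$ on $\ran\theta$ fixing $S$, take $R'=\Psi(\ran\theta)$ as a new copy of the injective envelope containing $\alpha_\psi^{-1}(\CE(S,\psi))$, and then simply invoke your part (ii) for the resulting minimal $S$-projection $\theta'$. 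This buys you a proof free of the external minimality citation and of the product computation, at the cost of the (standard, but worth a sentence) verification that the injective-envelope property transfers along a unital complete isometry fixing $S$ pointwise, so that Theorem \ref{T:Hamana} applies to $\theta'$. One notational slip: $\Psi$ is defined on $\ran\theta$, so you should write $\Phi(\CE(S,\theta))$ or $\Psi\bigl(\alpha_\theta^{-1}(\CE(S,\theta))\bigr)$ rather than $\Psi(\CE(S,\theta))$; the intended containment in $R'$ is correct as you use it.
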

\begin{proof}
Throughout the proof, we let $i:S\to B(H)$ denote the inclusion. We  let $R_\psi$ and $R_\theta$ denote the ranges of $\psi$ and $\theta$ respectively, which are both injective operator systems. Also, we consider the unital $\rC^*$-algebras $A=\CE(R_\theta,\theta)$ and $B=\CE(R_\psi,\psi)$. Recall that the inclusion maps $\alpha_\theta:R_\theta \to A$ and $\alpha_\psi:R_\psi\to B$ are unital, surjective complete isometries.

(i) Since $R_\psi$ is  injective, there is a unital completely positive map $\gamma:R_\theta\to R_\psi$ extending the identity map on $S$. Then, $ \theta\circ \gamma  :R_\theta\to R_\theta$ is an $S$-map. On the other hand, $(R_\theta,i)$ is a rigid extension of $S$ by Proposition \ref{P:injenvchar} and Theorem \ref{T:Hamana}, so we find $\theta\circ \gamma=\id$ on $R_\theta$. In particular, $\gamma$ must be completely isometric on $R_\theta$, and thus so is $\alpha_\psi \circ \gamma\circ \alpha_\theta^{-1}:A\to B$. Applying Proposition \ref{P:CEtrik}(ii), we infer that the map $\alpha_\theta\circ \theta\circ \alpha_\psi^{-1}:B\to A$ restricts to a $*$-homomorphism on the $\rC^*$-subalgebra of $B$ generated by $(\alpha_\psi\circ \gamma)(R_\theta)$, which certainly contains $\alpha_\psi(S)$ and hence $\CE(S,\psi)$. 
In particular, this means that $(\alpha_\theta\circ \theta\circ \alpha_\psi^{-1})(\CE(S,\psi))$ is the $\rC^*$-subalgebra of $A$ generated by $S$, which in turn is the very definition of $\CE(S,\theta)$. Therefore 
\begin{equation}\label{Eq:CE}
(\alpha_\theta\circ \theta\circ \alpha_\psi^{-1})(\CE(S,\psi))=\CE(S,\theta).
\end{equation}
We have thus shown that the map $\pi=\alpha_\theta\circ \theta\circ \alpha_\psi^{-1}|_{\CE(S,\psi)}:\CE(S,\psi)\to A$ is a $*$-homomorphism with range $\CE(S,\theta)$.

Next, observe that $(\CE(S,\theta),\alpha_\theta|_S)$ is a $\rC^*$-envelope of $S$ by virtue of Theorem \ref{T:Cstarenv}. Since $\pi\circ \alpha_\psi|_S=\alpha_\theta|_S$ by construction, it follows that $\ker \pi$ is the Shilov ideal of $\alpha_\psi(S)$ in $\CE(S,\psi)$.

(ii) Using \eqref{Eq:CE}, we find $\alpha_\theta^{-1}(\CE(S,\theta))=(\theta\circ\alpha_\psi^{-1})(\CE(S,\psi))$. If $\alpha_\psi^{-1}(\CE(S,\psi))$ is contained in the range of $\theta$, then the fact that $\theta$ is idempotent yields
\[
\alpha_\theta^{-1}(\CE(S,\theta))=\alpha_\psi^{-1}(\CE(S,\psi))
\]
as desired.

(iii) Let $T=\alpha_\theta^{-1}(\CE(S,\theta))$, which is an operator system in $R_\theta$. By assumption, we see that $\alpha_\psi^{-1}\circ \sigma\circ \alpha_\theta:T\to R_\psi$ is an $S$-map. By injectivity of $R_\psi$, we can extend this map to an $S$-map  $\Phi:B(H)\to R_\psi$. Then, $\theta'=\Phi\circ \theta$ is another minimal $S$-projection by \cite[Proposition 2.11]{paulsen2011}. 

Recall  now that for an $S$-projection $\omega:B(H)\to B(H)$ with range $R$, $\alpha_\omega\circ \omega:\rC^*(R)\to \CE(R,\omega)$ is a $*$-homomorphism, so we infer that
$
\alpha_\omega^{-1}(\CE(S,\omega))=\omega(\rC^*(S))
$
 is the norm-closed subspace of $B(H)$ consisting of elements of the form $\omega(t_1 t_2\ldots t_n)$ for any choice of finitely many  $t_1,\ldots,t_n\in S$. 
 
 Therefore, to establish our claim that $\alpha_{\theta'}^{-1}(\CE(S,\theta'))=\alpha_\psi^{-1}(\CE(S,\psi))$, it suffices to establish that, given $t_1,\ldots,t_n\in S$ we must have $\theta'(t_1t_2\cdots t_n)=\psi(t_1t_2\cdots t_n)$. 
 To see this, note that \[(\alpha_\theta\circ \theta)(t_1t_2\ldots t_n)=t_1*_\theta t_2 *_\theta \cdots *_\theta t_n\in \CE(S,\theta)\]
so $\theta(t_1t_2\ldots t_n)\in T$ and
\begin{align*}
\theta'(t_1t_2\cdots t_n)&=\Phi( \theta(t_1t_2\ldots t_n))=(\alpha_\psi^{-1}\circ \sigma\circ \alpha_\theta \circ \theta)(t_1t_2\ldots t_n)\\
&=(\alpha_\psi^{-1}\circ \sigma)(t_1*_\theta t_2 *_\theta \cdots *_\theta t_n) \\
&=\alpha_\psi^{-1}(\sigma(t_1)*_\psi \sigma(t_2) *_\psi \cdots *_\psi \sigma(t_n))\\
&=\alpha_\psi^{-1}(t_1*_\psi t_2 *_\psi \cdots *_\psi t_n)=\psi(t_1t_2\cdots t_n)
\end{align*}
since $\sigma$ is a $*$-homomorphism. 
\end{proof}

We remark here that the special case of statement (i) where $\psi$ is the identity map is found in \cite[Corollary 3.1]{kakariadis2013}.

\section{$\rC^*$-supports and the unique  extension property}\label{S:Cstarext}

Let $S\subset B(H)$ be an operator system. In this section, we study $\rC^*$-supports as a means to examine the configuration of the $\rC^*$-algebra $\rC^*(S)$ relative to all the copies of the injective envelope of $S$ contained in $B(H)$. We then utilize the resulting information to characterize the unique extension property for injective $*$-representations.

\subsection{$\rC^*$-supports and containment in an injective envelope}

An operator system $X\subset B(H)$ containing $S$ is a \emph{$\rC^*$-support of $S$} if there is a $\rC^*$-extension $(B,j)$ of $S$ along with a unital, surjective, completely isometric map  $\Gamma:B\to X$ such $\Gamma\circ j=\id$. We note that if $X$ is a $\rC^*$-support of $S$, then it is a so-called \emph{$\rC^*$-system} in the sense of \cite{KW1998}. 

The collection of all $\rC^*$-supports of $S$ is actually exhausted by Choi--Effros products, as we show next.

\begin{theorem}\label{T:Cstarext}
Let $S\subset B(H)$  be an operator system. Let $X\subset B(H)$ be an operator system containing $S$. Then, the following statements are equivalent.
\begin{enumerate}[{\rm (i)}]
\item $X$ is a $\rC^*$-support of $S$.
\item There is an $S$-projection $\theta:B(H)\to B(H)$ such that  $X=\alpha_\theta^{-1}(\CE(S,\theta))$.
\end{enumerate}
\end{theorem}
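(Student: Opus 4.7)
The plan is to prove both implications directly, with the nontrivial direction being (i)$\Rightarrow$(ii). The guiding principle is that $\CE(S,\theta)$ is, by definition, the $\rC^*$-subalgebra of $\CE(R,\theta)$ generated by $\alpha_\theta(S)$, where $R=\ran\theta$; in particular, $\alpha_\theta^{-1}(\CE(S,\theta))$ equals $X$ precisely when the Choi--Effros $\rC^*$-algebra structure that $\theta$ puts on $X$ is generated, as a $\rC^*$-algebra, by $S$.

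For (ii)$\Rightarrow$(i), assume $\theta$ is an $S$-projection and set $X=\alpha_\theta^{-1}(\CE(S,\theta))$. Since $\theta|_S=\id$, we have $S\subset\ran\theta$ and $\alpha_\theta(S)\subset\CE(S,\theta)$, so $S\subset X$. Taking $B=\CE(S,\theta)$, $j=\alpha_\theta|_S$, and $\Gamma=\alpha_\theta^{-1}|_B$, one checks immediately that $(B,j)$ is a $\rC^*$-extension of $S$ and that $\Gamma$ witnesses $X$ as a $\rC^*$-support.

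For (i)$\Rightarrow$(ii), suppose $(B,j)$ and $\Gamma:B\to X$ constitute the $\rC^*$-support structure on $X$. The first step is to manufacture an $S$-projection whose range is $X$. Using Arveson's extension theorem, I would extend the unital complete contraction $\Gamma^{-1}:X\to B$ to a unital completely positive map $\rho:B(H)\to B$, and then put $\theta=\Gamma\circ\rho:B(H)\to B(H)$. The relation $\Gamma\circ j=\id_S$ forces $\rho|_S=j$, so $\theta|_S=\id$; moreover $\theta$ acts as the identity on $X$, which gives both $\ran\theta=X$ and idempotency. Hence $\theta$ is an $S$-projection with range $X$.

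It then remains to show $\CE(S,\theta)=\CE(X,\theta)$, for then $X=\alpha_\theta^{-1}(\CE(X,\theta))=\alpha_\theta^{-1}(\CE(S,\theta))$. The key observation is that $\alpha_\theta\circ\Gamma:B\to\CE(X,\theta)$ is a unital surjective complete isometry between unital $\rC^*$-algebras. Applying Proposition \ref{P:CEtrik}(i) with $\gamma=\alpha_\theta\circ\Gamma$ produces a unital $*$-homomorphism that is a left inverse of $\alpha_\theta\circ\Gamma$, forcing $\alpha_\theta\circ\Gamma$ itself to be a $*$-isomorphism. Since $B=\rC^*(j(S))$ and this isomorphism carries $j(S)$ to $\alpha_\theta(S)$, we conclude that $\CE(X,\theta)=\rC^*(\alpha_\theta(S))=\CE(S,\theta)$, completing the argument. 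The main subtlety throughout is keeping careful track of the distinction between the ambient operator space structure on $X\subset B(H)$ and the Choi--Effros $\rC^*$-algebra structure on $\CE(X,\theta)$; promoting the a priori merely completely isometric map $\alpha_\theta\circ\Gamma$ to a genuine $*$-isomorphism via Proposition \ref{P:CEtrik} is where the real content of the proof lies.
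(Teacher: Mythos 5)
Your direction (ii)$\Rightarrow$(i) is fine and matches the paper. The direction (i)$\Rightarrow$(ii), however, has a genuine gap at its very first step: you cannot invoke Arveson's extension theorem to extend $\Gamma^{-1}:X\to B$ to a unital completely positive map $\rho:B(H)\to B$, because Arveson's theorem produces extensions with values in $B(K)$ (or more generally in an injective operator system), not in an arbitrary unital $\rC^*$-algebra $B$. Composing with a faithful representation $\sigma:B\to B(K)$ and extending $\sigma\circ\Gamma^{-1}$ does not help, since the extension then takes values in $B(K)$ with no reason to land in $\sigma(B)$. The problem is not merely technical: your strategy requires an $S$-projection $\theta$ with $\ran\theta=X$, and the range of a unital completely positive idempotent on $B(H)$ is automatically an \emph{injective} operator system, whereas a $\rC^*$-support need not be injective. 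For instance, take $S=X=A=\rC^*(S)$ with $A$ a non-injective separable $\rC^*$-algebra such as $\mathrm{C}(\bT)$ acting on $L^2(\bT)$; then $X$ is trivially a $\rC^*$-support of $S$, but no unital completely positive projection of $B(H)$ onto $X$ exists, so the intermediate object you build cannot exist, even though the conclusion (ii) of the theorem still holds (with a $\theta$ whose range strictly contains $X$). Your final step, promoting $\alpha_\theta\circ\Gamma$ to a $*$-isomorphism via Proposition \ref{P:CEtrik}, is sound reasoning, but it is applied inside a setup that cannot be arranged.

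The paper's proof avoids this by \emph{not} asking for a projection onto $X$: it takes a \emph{minimal $X$-projection} $\theta$ (via Theorems \ref{T:Hamana} and \ref{T:minproj}), whose range $R$ is an injective envelope of $X$ and typically contains $X$ strictly. One then gets, from Proposition \ref{P:CEtrik}, a unital $*$-homomorphism $\pi:\CE(X,\theta)\to B$ with $\pi\circ\alpha_\theta\circ\Gamma=\id_B$, but since $\alpha_\theta(X)$ need not exhaust $\CE(X,\theta)$, one cannot conclude directly that $\pi$ is injective; this is where the real work lies. The paper fixes a faithful representation $\sigma$ of $B$, extends $\sigma\circ\pi\circ\alpha_\theta$ to a unital completely positive map on $R$, and uses the \emph{essentiality} of the injective envelope $(R,i)$ of $X$ (Proposition \ref{P:injenvchar}) to upgrade complete isometry on $X$ to complete isometry on all of $\alpha_\theta^{-1}(\CE(X,\theta))$, whence $\pi$ is a $*$-isomorphism and $X=\alpha_\theta^{-1}(\CE(S,\theta))$. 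If you want to salvage your write-up, replace the construction of $\theta$ by this choice of a minimal $X$-projection and add the essentiality argument; the rest of your outline then falls into place.
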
 
\begin{proof}

(ii) $\Rightarrow$ (i):  Let $B=\CE(S,\theta)$. Then, $(B,\alpha_\theta|_S)$ is a $\rC^*$-extension of $S$. If we let $\Gamma=\alpha_\theta^{-1}:B\to X$, then $\Gamma$ is  unital, surjective, completely isometric map such that $\Gamma \circ \alpha_\theta=\id$, as needed.

(i) $\Rightarrow$ (ii):  By assumption, there is a $\rC^*$-extension $(B,j)$ of $S$ along with a unital, surjective, completely isometric map $\Gamma:B\to X$ such $\Gamma\circ j=\id$. Choose an isometric unital $*$-representation $\sigma:B\to B(K)$. Apply Theorems \ref{T:Hamana} and \ref{T:minproj} to the operator system $X$ to find a minimal $X$-projection $\theta:B(H)\to B(H)$ whose range $R$ is an injective envelope of $X$.  Then,  $\alpha_\theta\circ \Gamma:B\to \CE(R,\theta)$ is unital and completely isometric, and $\rC^*((\alpha_\theta\circ \Gamma)(B))=\CE(X,\theta)$. By Proposition \ref{P:CEtrik}, there is a unital $*$-homomorphism $\pi:\CE(X,\theta)\to B$ such that $\pi\circ\alpha_\theta\circ \Gamma=\id$ on $B$. In particular, this implies that $\sigma\circ \pi\circ \alpha_\theta|_X=\sigma\circ \Gamma^{-1}$ is completely isometric. 

Let $\psi:R\to B(K)$ be a unital completely positive extension of $\sigma\circ \pi\circ\alpha_\theta:\alpha_\theta^{-1}(\CE(X,\theta))\to B(K)$. Then, $\psi$ is completely isometric on $X$ by the previous paragraph. Letting $i:X\to B(H)$ denote the inclusion, $(R,i)$ is an essential extension of $X$ by Proposition \ref{P:injenvchar}. Consequently, we infer that $\psi$ is completely isometric on $R$ and hence  on $\alpha_\theta^{-1}(\CE(X,\theta))$. In other words, $\sigma\circ \pi\circ\alpha_\theta$ is completely isometric on $ \alpha_\theta^{-1}(\CE(X,\theta))$. In turn, because $\sigma$ is isometric, this implies that $\pi$ must also be  isometric, so it is a $*$-isomorphism. 

Finally, observe that, on one hand, 
\[
\pi(\CE(S,\theta))=\rC^*((\pi \circ \alpha_\theta)(S))=\rC^*((\pi\circ\alpha_\theta\circ \Gamma \circ j)(S))=\rC^*(j(S))=B
\]
and, on the other,
\[
\pi(\alpha_\theta(X))=(\pi\circ \alpha_\theta\circ  \Gamma)(B)=B.
\]
Since $\pi$ is a $*$-isomorphism, we must have $X=\alpha_\theta^{-1}(\CE(S,\theta))$.
\end{proof}

In light of the previous result, it is natural to wonder what kind of $\rC^*$-support of $S$ arises via the Choi--Effros product associated to a \emph{minimal} $S$-projection. Theorem \ref{T:Cstarenv} indicates that an answer should involve the $\rC^*$-envelope in some way. More precisely, we have the following.

\begin{corollary}\label{C:coversenv}
Let $S\subset B(H)$  be an operator system. Let $X\subset B(H)$ be an operator system containing $S$.  Then, the following statements are equivalent.
\begin{enumerate}[{\rm (i)}]
\item If $(A,j)$ is a $\rC^*$-envelope of $S$, then there is a unital, surjective, completely isometric map  $\Gamma:A\to X$ such $\Gamma\circ j=\id$. 
\item There is a minimal $S$-projection $\theta:B(H)\to B(H)$ such that  \[X=\alpha_\theta ^{-1}(\CE(S,\theta)).\]
\end{enumerate}
\end{corollary}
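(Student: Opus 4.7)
The plan is to prove the two implications separately, with (ii) $\Rightarrow$ (i) being formal and (i) $\Rightarrow$ (ii) requiring the comparison machinery developed in Proposition \ref{P:CEcomp}.

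For (ii) $\Rightarrow$ (i), I would first invoke Theorem \ref{T:Cstarenv} to conclude that $(\CE(S,\theta),\alpha_\theta|_S)$ is itself a $\rC^*$-envelope of $S$. The essential uniqueness of the $\rC^*$-envelope then provides, for any given $\rC^*$-envelope $(A,j)$, a unital $*$-isomorphism $\rho:A\to \CE(S,\theta)$ with $\rho\circ j=\alpha_\theta|_S$. Setting $\Gamma:=\alpha_\theta^{-1}\circ \rho:A\to X$ produces the desired unital, surjective, completely isometric map satisfying $\Gamma\circ j=\id$ on $S$.

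For (i) $\Rightarrow$ (ii), the first step is to observe that (i) implies $X$ is a $\rC^*$-support of $S$ (take the $\rC^*$-extension in the definition to be any $\rC^*$-envelope), so Theorem \ref{T:Cstarext} yields some $S$-projection $\psi$ with $X=\alpha_\psi^{-1}(\CE(S,\psi))$. The challenge is that $\psi$ need not be minimal, and upgrading it will occupy the rest of the argument. Fixing a $\rC^*$-envelope $(A,j)$ with its associated map $\Gamma$ from (i), I would consider $\mu:=\alpha_\psi|_X\circ \Gamma:A\to \CE(S,\psi)$, a unital, surjective, completely isometric map between unital $\rC^*$-algebras satisfying $\mu\circ j=\alpha_\psi|_S$. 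Proposition \ref{P:CEtrik}(i) applied to $\mu$ produces a unital $*$-homomorphism $\eta:\CE(S,\psi)\to A$ with $\eta\circ \mu=\id$; since $\mu$ is bijective, $\eta=\mu^{-1}$, so $\mu$ is in fact a unital $*$-isomorphism.

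With this in hand, I would select any minimal $S$-projection $\theta$ (guaranteed by Theorem \ref{T:minproj}), so that $(\CE(S,\theta),\alpha_\theta|_S)$ is also a $\rC^*$-envelope by Theorem \ref{T:Cstarenv}. Uniqueness of the $\rC^*$-envelope yields a unital $*$-isomorphism $\rho:\CE(S,\theta)\to A$ with $\rho\circ \alpha_\theta|_S=j$, and then $\sigma:=\mu\circ \rho:\CE(S,\theta)\to \CE(S,\psi)$ is a unital $*$-isomorphism satisfying $\sigma\circ \alpha_\theta|_S=\alpha_\psi|_S$. This is precisely the hypothesis of Proposition \ref{P:CEcomp}(iii), which immediately produces a minimal $S$-projection $\theta'$ with $\alpha_{\theta'}^{-1}(\CE(S,\theta'))=\alpha_\psi^{-1}(\CE(S,\psi))=X$, as required. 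I expect the main obstacle to be recognizing that hypothesis (i) is strictly stronger than asserting that $X$ is a $\rC^*$-support: it forces the implicit $\rC^*$-extension to actually be a $\rC^*$-envelope, and it is this stronger fact, reflected in $\mu$ being a $*$-isomorphism, that unlocks Proposition \ref{P:CEcomp}(iii).
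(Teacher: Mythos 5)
Your proposal is correct and follows essentially the same route as the paper: both directions rest on Theorem \ref{T:Cstarenv}, Theorem \ref{T:Cstarext}, Proposition \ref{P:CEtrik}, and Proposition \ref{P:CEcomp}(iii). The only cosmetic difference is that you pass through an abstract envelope $(A,j)$ and compose with the uniqueness isomorphism, whereas the paper works directly with the concrete envelope $(\CE(S,\omega),\alpha_\omega|_S)$ coming from a minimal $S$-projection $\omega$.
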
 
\begin{proof}
(ii) $\Rightarrow$ (i): By the uniqueness property of the $\rC^*$-envelope, it suffices to establish the desired statement for a specific choice of $\rC^*$-envelope of $S$. Thus, by virtue of Theorem \ref{T:Cstarenv}, we may choose $(A,j)=(\CE(S,\theta),\alpha_\theta|_S)$, in which case it suffices to take $\Gamma=\alpha_\theta^{-1}$ .

(i) $\Rightarrow$ (ii): By Theorems \ref{T:minproj} and \ref{T:Cstarenv}, there is a minimal $S$-projection $\omega:B(H)\to B(H)$ such that $(\CE(S,\omega),\alpha_{\omega}|_S)$ is a $\rC^*$-envelope of $S$. By assumption, we obtain a unital, surjective, completely isometric map $\Gamma:\CE(S,\omega)\to X$ such that $\Gamma\circ \alpha_{\omega}|_S=\id$.

Next, note that our assumption implies in particular that  $X$ is a $\rC^*$-support of $S$. By applying Theorem \ref{T:Cstarext} we can find an $S$-projection $\psi:B(H)\to B(H)$ such that $X=\alpha_\psi^{-1}(\CE(S,\psi))$.  Then, the map $\pi=\alpha_\psi\circ\Gamma:\CE(S,\omega)\to \CE(S,\psi)$ is a unital, surjective, completely isometric map between two $\rC^*$-algebras, so by Proposition \ref{P:CEtrik} it must be a $*$-isomorphism.
Moreover, we have
\[
\pi\circ \alpha_\omega|_S=\alpha_\psi\circ \Gamma\circ \alpha_\omega|_S=\alpha_{\psi}|_S.
\]
Applying Proposition \ref{P:CEcomp}(iii), we may find another minimal $S$-projection $\theta:B(H)\to B(H)$ such that 
\[
\alpha_{\theta}^{-1}(\CE(S,\theta))=\alpha_\psi^{-1}(\CE(S,\psi))=X.
\]
\end{proof}

We now combine our findings to elucidate the relative configuration of $\rC^*(S)$ with respect to the various injective envelopes of $S$ inside of $B(H)$. The following is the first main result of the paper.

\begin{theorem}\label{T:Shtrivial}
Let $S\subset B(H)$  be an operator system. Then, the following statements are equivalent.
\begin{enumerate}[{\rm (i)}]
\item If $X\subset B(H)$ is a $\rC^*$-support of $S$ and $(A,j)$ is a $\rC^*$-envelope of $S$, then there is a unital, surjective, completely isometric map $\Gamma:A\to X$ such that $\Gamma\circ j=\id$. 
\item $(\rC^*(S),i)$ is a  $\rC^*$-envelope of $S$, where $i:S\to B(H)$ is the inclusion map.
\item $\rC^*(S)$ is contained in some injective operator system $R\subset B(H)$  such that $(R,\kappa)$ is an injective envelope of $S$, where $\kappa:R\to B(H)$ is the inclusion map.
\item For each $a\in \rC^*(S)$, there is an injective operator system $R\subset B(H)$  such that $a\in R$ and $(R,\kappa)$ is an injective envelope of $S$, where $\kappa:R\to B(H)$ is the inclusion map.
\end{enumerate}
\end{theorem}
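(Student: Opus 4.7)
The plan is to prove the implication cycle (i)$\Rightarrow$(ii)$\Rightarrow$(iii)$\Rightarrow$(iv)$\Rightarrow$(ii), supplemented by (iii)$\Rightarrow$(i), which together yield the full web of equivalences. The first four links are essentially bookkeeping based on the Section \ref{S:injenv} machinery, while (iii)$\Rightarrow$(i) is where the real work occurs.

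For (i)$\Rightarrow$(ii) I would apply (i) to the trivial $\rC^*$-support $X=\rC^*(S)$: the resulting unital surjective complete isometry $A\to\rC^*(S)$ from any $\rC^*$-envelope $(A,j)$ must be a $*$-isomorphism by Proposition \ref{P:CEtrik}, so $(\rC^*(S),i)$ inherits the universal property. For (ii)$\Rightarrow$(iii), observe that $\rC^*(S)$ is a $\rC^*$-support of $S$ for which condition~(i) of Corollary \ref{C:coversenv} holds (any $\rC^*$-envelope is $*$-isomorphic to $(\rC^*(S),i)$ by uniqueness), so the corollary supplies a minimal $S$-projection $\theta$ with $\rC^*(S)=\alpha_\theta^{-1}(\CE(S,\theta))\subset\ran\theta$, an injective envelope by Theorem \ref{T:Hamana}. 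The implication (iii)$\Rightarrow$(iv) is immediate. For (iv)$\Rightarrow$(ii), recall from Proposition \ref{P:CEcomp}(i) (applied with $\psi=\id$) together with \eqref{Eq:Kak} that the Shilov ideal of $S$ in $\rC^*(S)$ coincides with $\rC^*(S)\cap\ker\theta$ for \emph{every} minimal $S$-projection $\theta$. If $a$ belongs to this ideal, assumption (iv) furnishes a minimal $\theta_a$ with $a\in\ran\theta_a$, so that $\theta_a(a)=a$, while simultaneously $\theta_a(a)=0$, forcing $a=0$. The Shilov ideal is thus trivial and $(\rC^*(S),i)$ is a $\rC^*$-envelope.

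The heart of the argument, and its main obstacle, is (iii)$\Rightarrow$(i). Use (iii) to fix a minimal $S$-projection $\theta$ with $\rC^*(S)\subset R=\ran\theta$, and write an arbitrary $\rC^*$-support $X$ as $X=\alpha_\psi^{-1}(\CE(S,\psi))$ for some $S$-projection $\psi$ via Theorem \ref{T:Cstarext}. The natural candidate is the restriction $\Gamma_0:=\psi|_{\rC^*(S)}\colon\rC^*(S)\to X$, and the difficulty is promoting this a priori merely completely positive map into a complete isometry. To do so, I would apply Proposition \ref{P:minuep} to the $S$-map $\theta\circ\psi\colon B(H)\to R$: the proposition forces $\theta\circ\psi$ to agree with $\theta$ on $R+\rC^*(S)$, which under (iii) collapses to $R$, so $\theta\circ\psi|_{\rC^*(S)}=\theta|_{\rC^*(S)}=\id$. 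Thus $\theta|_{\psi(\rC^*(S))}$ is a unital completely positive left inverse of $\Gamma_0$, and the standard sandwich $\|a\|_n\le\|\Gamma_0(a)\|_n\le\|a\|_n$ at every matrix level then upgrades $\Gamma_0$ to a complete isometry. Surjectivity follows from the identity $\alpha_\psi^{-1}(\CE(S,\psi))=\psi(\rC^*(S))$ recorded in the proof of Proposition \ref{P:CEcomp}(iii). Finally, for any $\rC^*$-envelope $(A,j)$, uniqueness of the $\rC^*$-envelope (available once (ii) has been established) produces a $*$-isomorphism $\rho\colon A\to\rC^*(S)$ with $\rho\circ j=i$, and $\Gamma:=\Gamma_0\circ\rho$ is the required map.
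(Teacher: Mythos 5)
Your proof is correct, and for the implications (i)$\Rightarrow$(ii), (ii)$\Rightarrow$(iii), (iii)$\Rightarrow$(iv) and (iv)$\Rightarrow$(ii) it follows the paper essentially verbatim (the citation of \eqref{Eq:Kak} in (iv)$\Rightarrow$(ii) is superfluous --- the fact you need, that the Shilov ideal equals $\rC^*(S)\cap\ker\theta$, is exactly Proposition \ref{P:CEcomp}(i) with $\psi=\id$, which you also cite; and note that ``(iii) furnishes a minimal $\theta$'' silently uses injectivity of $R$ plus Theorem \ref{T:Hamana}, a step the paper itself spells out). Where you genuinely diverge is in closing the web: the paper proves (ii)$\Rightarrow$(i) directly, by taking the $*$-homomorphism $\pi=(\alpha_\psi\circ\psi)|_{\rC^*(S)}$ onto $\CE(S,\psi)$, splitting it with the surjection $\sigma:\CE(S,\psi)\to\rC^*(S)$ supplied by the universal property of the $\rC^*$-envelope, and concluding $\pi=\sigma^{-1}$ is a $*$-isomorphism; you instead prove (iii)$\Rightarrow$(i), using the spatial containment $\rC^*(S)\subset\ran\theta$ together with the rigidity statement in Proposition \ref{P:minuep} (applied to the $S$-map $\theta\circ\psi$) to exhibit $\theta$ as a ucp left inverse of $\psi|_{\rC^*(S)}$, and then upgrade $\psi|_{\rC^*(S)}$ to a complete isometry by the two-sided contractivity sandwich; surjectivity onto $X=\alpha_\psi^{-1}(\CE(S,\psi))=\psi(\rC^*(S))$ is the identity recorded in the proof of Proposition \ref{P:CEcomp}(iii). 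Both routes are valid. The paper's argument is purely $\rC^*$-algebraic and self-contained given (ii), needing no injective envelope containing $\rC^*(S)$; yours is more geometric and arguably more transparent about \emph{why} the complete isometry exists (a ucp retraction onto $A$), at the cost of slightly heavier logical bookkeeping, since you must first run (iii)$\Rightarrow$(iv)$\Rightarrow$(ii) in order to invoke uniqueness of the $\rC^*$-envelope when transferring the conclusion from $(\rC^*(S),i)$ to an arbitrary $(A,j)$ --- which your ordering of implications does legitimately permit.
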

\begin{proof}
(i) $\Rightarrow$ (ii): Let $(A,j)$ be a $\rC^*$-envelope of $S$. Plainly,  $(\rC^*(S),i)$ is a $\rC^*$-extension of $S$, and hence a $\rC^*$-support. By assumption, there is a unital, surjective, completely isometric map $\Gamma:A\to \rC^*(S)$ such that $\Gamma\circ j=\id$. By Proposition \ref{P:CEtrik}, we infer that $\Gamma$ is a $*$-isomorphism. Thus, $(\rC^*(S),i)$ is a  $\rC^*$-envelope of $S$.

(ii) $\Rightarrow$ (i):  Fix a $\rC^*$-support $X\subset B(H)$ of $S$. By Theorem \ref{T:Cstarext}, there is an $S$-projection $\psi:B(H)\to B(H)$ such that $X=\alpha_\psi^{-1}(\CE(S,\psi))$. Furthermore, denoting the range of $\psi$ by $R_\psi\subset B(H)$, we know that $\alpha_\psi\circ \psi:\rC^*(R_\psi)\to \CE(R_\psi,\psi)$ is a unital  $*$-homomorphism by construction of the Choi--Effros product. Let $\pi=(\alpha_\psi\circ \psi)|_{\rC^*(S)}$, which is a $*$-homomorphism satisfying $\pi|_S=\alpha_\psi|_S$. In particular, the image of $\pi$ is $\CE(S,\psi)$. 

Next, our assumption is that $(\rC^*(S),i)$ is a $\rC^*$-envelope of $S$, so there must exist a unital surjective $*$-homomorphism $\sigma:\CE(S,\psi)\to \rC^*(S)$ such that $\sigma \circ \alpha_\psi|_S=\id$. We have
\[
\sigma\circ \pi|_S=\sigma\circ \alpha_\psi|_S=\id 
\]
and
\[
\pi\circ \sigma\circ \alpha_\psi|_S=\pi|_S=\alpha_\psi|_S.
\]
Consequently, because $\pi$ and $\sigma$ are $*$-homomorphisms, we find $\pi=\sigma^{-1}$ so $\pi$ is completely isometric.
Hence, we may define $\Gamma=\alpha_\psi^{-1}\circ \pi:\rC^*(S)\to X$, which is a unital, surjective, completely isometric map satisfying $\Gamma|_S=\id$.  This establishes the desired statement for the $\rC^*$-envelope $(\rC^*(S),i)$. The corresponding statement for any $\rC^*$-envelope of $S$ follows readily by uniqueness.

(ii) $\Rightarrow$ (iii): We know that $(\rC^*(S),i)$ is a $\rC^*$-envelope of $S$. Thus, applying Corollary \ref{C:coversenv}, we find a minimal $S$-projection $\theta:B(H)\to B(H)$ such that  \[\rC^*(S)=\alpha_\theta ^{-1}(\CE(S,\theta)).\] In particular, this shows that $\rC^*(S)$ is contained in the range of $\theta$. Furthermore, if we let $\kappa:\ran\theta\to B(H)$ denote the inclusion, then $(\ran \theta, \kappa)$  is an injective envelope of $S$ by Theorem \ref{T:Hamana}.

(iii) $\Rightarrow$ (iv): This is trivial. 

(iv) $\Rightarrow$ (ii): Let $a\in \rC^*(S)$ be an element in the Shilov ideal of $S$ in $\rC^*(S)$. By assumption, there is an injective operator system $R\subset B(H)$  containing $a$ such that $(R,\kappa)$ is an injective envelope of $S$, where $\kappa:R\to B(H)$ denote the inclusion map. Since $R$ is injective, we may choose an $S$-projection $\theta:B(H)\to B(H)$ with range $R$. In turn, Theorem \ref{T:Hamana} implies that $\theta$ must be a minimal $S$-projection.  By virtue of Proposition \ref{P:CEcomp}(i) applied with $\psi=\id$, we see that $a$ must lie in the kernel of $\alpha_\theta\circ \theta$, and hence $\theta(a)=0$. On other hand, $\theta(a)=a$ since $a$ lies in the range of the idempotent $\theta$, so we conclude $a=0$. Therefore, the Shilov ideal of $S$ in $\rC^*(S)$ is trivial, so that $(\rC^*(S),i)$ is a  $\rC^*$-envelope of $S$.
\end{proof}

The previous result characterizes when the $\rC^*$-algebra $\rC^*(S)$ is contained in an injective envelope of $S$ inside of $B(H)$. The following complement addresses the question of when this $\rC^*$-algebra is contained in \emph{every} injective envelope of $S$ inside of $B(H)$.

\begin{corollary}\label{C:uniqueCE}
Let $S\subset B(H)$  be an operator system. Then, the following statements are equivalent.
\begin{enumerate}[{\rm (i)}]
\item $\rC^*(S)$ is the unique $\rC^*$-support of $S$.
\item $\rC^*(S)$ is contained  in every injective operator system $R\subset B(H)$ containing $S$.
\item $\rC^*(S)$ is contained  in every operator system $R\subset B(H)$ for which $(R,\kappa)$ is an injective envelope of $S$, where $\kappa:R\to B(H)$ is the inclusion map.
\end{enumerate}
\end{corollary}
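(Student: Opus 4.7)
The plan is to prove the cycle $(\rm ii)\Rightarrow(\rm iii)\Rightarrow(\rm i)\Rightarrow(\rm ii)$, relying on the characterization of $\rC^*$-supports as Choi--Effros preimages (Theorem \ref{T:Cstarext}), the existence of minimal $S$-projections below a given one (Theorem \ref{T:minproj}), and Hamana's identification of injective envelopes with ranges of minimal $S$-projections (Theorem \ref{T:Hamana}). The implication $(\rm ii)\Rightarrow(\rm iii)$ is immediate since any injective envelope of $S$ sitting inside $B(H)$ is in particular an injective operator system containing $S$.

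For $(\rm i)\Rightarrow(\rm ii)$, I start with an arbitrary injective operator system $R\subset B(H)$ with $R\supset S$. Injectivity of $R$ produces a unital completely positive idempotent $\psi:B(H)\to B(H)$ with range $R$; since $S\subset R=\ran\psi$ and $\psi$ restricts to the identity on its range, $\psi$ is an $S$-projection. Theorem \ref{T:Cstarext} then exhibits $X:=\alpha_\psi^{-1}(\CE(S,\psi))$ as a $\rC^*$-support of $S$, with $X\subset \ran\psi=R$. The uniqueness assumption forces $X=\rC^*(S)$, whence $\rC^*(S)\subset R$.

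For $(\rm iii)\Rightarrow(\rm i)$, let $X\subset B(H)$ be any $\rC^*$-support of $S$. Theorem \ref{T:Cstarext} yields an $S$-projection $\psi:B(H)\to B(H)$ with $X=\alpha_\psi^{-1}(\CE(S,\psi))$. Applying Theorem \ref{T:minproj} to $\psi$, I obtain a minimal $S$-projection $\theta:B(H)\to B(H)$ with $\ran\theta\subset\ran\psi$; by Theorem \ref{T:Hamana}, $(\ran\theta,\kappa)$ is an injective envelope of $S$ in $B(H)$. The hypothesis (iii) then gives $\rC^*(S)\subset \ran\theta\subset \ran\psi$. Here lies the key step and the main point to be checked carefully: since $\psi$ acts as the identity on $\ran\psi\supset\rC^*(S)$, for any $s,t\in S$ we have $st\in\rC^*(S)\subset\ran\psi$, so $s*_\psi t=\psi(st)=st$. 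An induction on the length of monomials shows that the Choi--Effros product $*_\psi$ agrees with the original product of $B(H)$ on all products of elements of $S$, so the underlying subset of $B(H)$ corresponding to $\CE(S,\psi)\subset\CE(\ran\psi,\psi)$ is precisely $\rC^*(S)$. Since $\alpha_\psi$ is the identity as a set map on $\ran\psi$, this gives $X=\alpha_\psi^{-1}(\CE(S,\psi))=\rC^*(S)$, establishing uniqueness.

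The only genuinely delicate point is the product-matching argument in $(\rm iii)\Rightarrow(\rm i)$: while the set identification $\alpha_\psi^{-1}(\CE(S,\psi))=\rC^*(S)$ is the crux, the reasoning is short once one observes that the hypothesis places $\rC^*(S)$ inside the range of the idempotent $\psi$, so that the deformed product induced by $\psi$ is indistinguishable from the ambient one on all monomials in $S$. Everything else is a direct appeal to the technology of Section \ref{S:injenv} together with the constructions used in the proof of Theorem \ref{T:Shtrivial}.
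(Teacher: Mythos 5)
Your proposal is correct, and the implications (i)$\Rightarrow$(ii) and (ii)$\Rightarrow$(iii) coincide with the paper's argument; the interesting divergence is in (iii)$\Rightarrow$(i). The paper represents an arbitrary $\rC^*$-support $X$ by a \emph{minimal} $S$-projection: it first feeds hypothesis (iii) into Theorem \ref{T:Shtrivial} to get the universal map from a $\rC^*$-envelope onto $X$, then invokes Corollary \ref{C:coversenv} to write $X=\alpha_\theta^{-1}(\CE(S,\theta))$ with $\theta$ minimal, and finally applies hypothesis (iii) to $\ran\theta$ together with Proposition \ref{P:CEcomp}(ii) (with $\psi=\id$) to conclude $X=\rC^*(S)$. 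You instead keep the possibly non-minimal $S$-projection $\psi$ furnished by Theorem \ref{T:Cstarext}, use Theorem \ref{T:minproj} and Theorem \ref{T:Hamana} only to plant a copy of the injective envelope inside $\ran\psi$ so that (iii) forces $\rC^*(S)\subset\ran\theta\subset\ran\psi$, and then argue directly that once $\rC^*(S)$ sits in the range of the idempotent $\psi$, the product $*_\psi$ agrees with the ambient product on all monomials in $S$ (the induction you sketch is sound, since $t_1\cdots t_n\in\rC^*(S)\subset\ran\psi$ and $\psi$ fixes its range), whence $\alpha_\psi^{-1}(\CE(S,\psi))=\rC^*(S)$. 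In effect you re-derive, without any minimality assumption, the description $\alpha_\psi^{-1}(\CE(S,\psi))=\psi(\rC^*(S))$ as the closed span of the elements $\psi(t_1\cdots t_n)$, which the paper records inside the proof of Proposition \ref{P:CEcomp}(iii); your route could be shortened by citing that observation. What each approach buys: yours is more self-contained and elementary, bypassing Theorem \ref{T:Shtrivial} and Corollary \ref{C:coversenv} entirely and never needing $X$ to be realized by a minimal projection; the paper's route is longer but reuses machinery it has already built and, along the way, ties the statement to the $\rC^*$-envelope characterizations that the surrounding results (e.g.\ Corollary \ref{C:uep}) exploit. One small point of care in your write-up, which you handle implicitly and correctly: the identification of the underlying set of $\CE(S,\psi)$ with $\rC^*(S)$ uses that $\alpha_\psi$ is a unital complete isometry fixing the linear structure and involution, so the closure of the span of $*_\psi$-monomials coincides with the ambient norm closure.
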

\begin{proof}
(i) $\Rightarrow$ (ii):  Let  $R\subset B(H)$ be an injective operator system containing $S$. Let $\theta:B(H)\to B(H)$ be an $S$-projection with range $R$. Since $\alpha_\theta^{-1}(\CE(S,\theta))$ is a $\rC^*$-support of $S$, our assumption implies $\rC^*(S)=\alpha_\theta^{-1}(\CE(S,\theta))$, which is plainly contained in $R$.

(ii) $\Rightarrow$ (iii): This is trivial.

(iii) $\Rightarrow$ (i):  Let $X\subset B(H)$ be a $\rC^*$-support of $S$. By assumption, we see that $\rC^*(S)$ is contained  in every operator system $R\subset B(H)$ for which $(R,\kappa)$ is an injective envelope of $S$.  Invoking Theorem \ref{T:Shtrivial}, we infer that if $(A,j)$ is a $\rC^*$-envelope of $S$, then there is a unital, surjective, completely isometric map $\Gamma:A\to X$ such that $\Gamma\circ j=\id$. Hence, by Corollary \ref{C:coversenv} there is a minimal $S$-projection $\theta:B(H)\to B(H)$ such that  $X=\alpha_\theta ^{-1}(\CE(S,\theta)).$ Note also that $(\ran \theta,i)$ is an injective envelope of $S$ by Theorem \ref{T:Hamana}, where $i:\ran\theta\to B(H)$ is the inclusion. Applying our assumption again, we obtain that $\rC^*(S)$ is contained in the range of $\theta$. Finally, Proposition \ref{P:CEcomp}(ii) (with $\psi=\id$) implies $\rC^*(S)=\alpha_\theta^{-1}(\CE(S,\theta))=X$.
\end{proof}

\subsection{A new characterization of the unique extension property}\label{SS:uep}
Let $A$ be a  unital $\rC^*$-algebra and let $S\subset A$ be an operator system such that $A=\rC^*(S)$. Let $\pi:A\to B(H)$ be a unital $*$-representation. We say that $\pi$ has the \emph{unique extension property} with respect to $S$ if, given a unital completely positive map $\psi:A\to B(H)$ such that $\pi|_S=\psi|_S$, we must necessarily have that $\pi=\psi$.  

As an application of the main results of this section, we now give a new characterization of this property for injective representations.

\begin{corollary}\label{C:uep}
Let $A$ be a  unital $\rC^*$-algebra and let $S\subset A$ be an operator system such that $A=\rC^*(S)$. Let $\pi:A\to B(H)$ be an injective unital $*$-representation.  Then, the following statements are equivalent.
\begin{enumerate}[{\rm (i)}]
\item $\pi$ has the unique extension property with respect to $S$. 
\item $\pi(A)$ is the unique $\rC^*$-support of $\pi(S)$.
\end{enumerate}
\end{corollary}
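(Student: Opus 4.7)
The plan is to reduce both statements of the corollary to a single intermediate condition, namely that $\pi(A)$ is contained in every copy of the injective envelope of $\pi(S)$ sitting inside $B(H)$, and then combine two separate equivalences. Throughout I would exploit that $\pi$, being injective, is a unital complete isometry on $A$, so that $\pi(S)\subset B(H)$ is an operator system with $\rC^*(\pi(S))=\pi(A)$.

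The first equivalence comes for free by applying Corollary \ref{C:uniqueCE} to the operator system $\pi(S)\subset B(H)$: this immediately yields that statement (ii) is the same as saying that $\pi(A)$ is contained in every operator system $R\subset B(H)$ such that the inclusion $R\hookrightarrow B(H)$ realizes $R$ as an injective envelope of $\pi(S)$.

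The second equivalence, between statement (i) and the same intermediate containment condition, is precisely the characterization of UEP for injective representations provided by Paulsen in \cite{paulsen2011}; invoking it completes the proof, and this is the sense in which Corollary \ref{C:uniqueCE} (that is, Theorem A) and Paulsen's framework are being combined, as announced in the introduction. Should a direct verification be preferred, the forward direction proceeds as follows: if $\theta:B(H)\to B(H)$ is a minimal $\pi(S)$-projection, then $\theta\circ \pi:A\to B(H)$ is unital completely positive and agrees with $\pi$ on $S$, so UEP forces $\theta\circ\pi=\pi$, i.e.\ $\theta|_{\pi(A)}=\id$, giving $\pi(A)\subset\ran\theta$. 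Conversely, given a UCP map $\psi:A\to B(H)$ agreeing with $\pi$ on $S$, one extends $\psi\circ \pi^{-1}:\pi(A)\to B(H)$ by Arveson's theorem to a $\pi(S)$-map $\Phi:B(H)\to B(H)$; applying Proposition \ref{P:minuep} with $S$ replaced by $\pi(S)$ and using the hypothesis $\pi(A)\subset\ran\theta$ for some (equivalently, every) minimal $\pi(S)$-projection $\theta$ yields $\Phi|_{\pi(A)}=\theta|_{\pi(A)}=\id$, so $\psi=\pi$.

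The main obstacle, if any, is lining up the exact formulation of Paulsen's characterization of UEP with the formulation of Corollary \ref{C:uniqueCE}; once this dictionary is in place the argument reduces to a concatenation of two established equivalences, with Proposition \ref{P:minuep} and Arveson's extension theorem doing the only real work in the fallback direct argument.
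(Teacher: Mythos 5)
Your primary route is the same as the paper's: apply Corollary \ref{C:uniqueCE} to $\pi(S)$ to identify (ii) with the condition that $\pi(A)$ lies in every copy of the injective envelope of $\pi(S)$ inside $B(H)$, and then invoke Paulsen. One small imprecision: what \cite[Proposition 4.3]{paulsen2011} gives is the equality of the intersection of all such copies with the set $\F$ of elements fixed by every $\pi(S)$-map on $B(H)$; to match this with (i) one still needs the (easy) reduction, via Arveson's invariance principle \cite[Theorem 2.1.2]{arveson1969} together with Arveson's extension theorem, of the unique extension property of $\pi$ to the statement $\pi(A)\subset\F$. This is exactly how the paper organizes the proof, so up to that bookkeeping your main argument is the paper's argument.

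Your fallback ``direct verification,'' however, has a genuine gap in the converse direction. First, the parenthetical ``for some (equivalently, every) minimal $\pi(S)$-projection'' is false: containment of $\pi(A)$ in the range of \emph{some} minimal $\pi(S)$-projection is equivalent, by Theorem \ref{T:Shtrivial}, to $\pi(A)$ being a $\rC^*$-envelope of $\pi(S)$, and Example \ref{E:BDO} shows this does not imply the unique extension property; so an argument that derives $\psi=\pi$ from the ``some'' version must be wrong. The error is the application of Proposition \ref{P:minuep}: that proposition only applies to $\pi(S)$-maps whose range is contained in $\ran\theta$, whereas your $\Phi$, obtained from Arveson's extension theorem, maps into $B(H)$ and need not take values in $\ran\theta$. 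Composing with $\theta$ only yields $\theta\circ\Phi=\theta$ on $\pi(A)$, i.e.\ $\theta\circ\psi=\pi$, which is weaker than $\psi=\pi$. A correct self-contained repair uses the containment in \emph{every} copy together with the fact (cited in the paper, in Theorems \ref{T:minproj} and \ref{T:expanse}, from \cite[Proposition 2.11]{paulsen2011}) that $\Phi\circ\omega$ is again a minimal $\pi(S)$-projection whenever $\omega$ is one: then both $\omega$ and $\Phi\circ\omega$ fix $\pi(A)$ pointwise by hypothesis, so $\Phi|_{\pi(A)}=(\Phi\circ\omega)|_{\pi(A)}=\id$ and $\psi=\pi$. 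This repaired argument is essentially a proof of \cite[Proposition 4.3]{paulsen2011}, i.e.\ of the very result the paper quotes, so the fallback does not actually bypass Paulsen's input.
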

\begin{proof}
Let $\F\subset B(H)$ denote the set of elements $t$ for which $\phi(t)=t$ for every $\pi(S)$-map $\phi:B(H)\to B(H)$. It is readily seen that the identity representation of $\pi(A)$ has the unique extension property with respect to $\pi(S)$ if and only if $\pi(A)\subset \F$. By virtue of Arveson's well-known invariance principle for the unique extension property (see for instance \cite[Theorem 2.1.2]{arveson1969}), we see that (i) is then equivalent to $\pi(A)\subset \F$.

Corollary \ref{C:uniqueCE} shows that statement (ii) is  equivalent to the fact that $\pi(A)\subset \bigcap R$, where the intersection is taken over all operators systems $R\subset B(H)$ such that $(R,i)$ is an injective envelope of $\pi(S)$ (here $i:R\to B(H)$ denotes the inclusion, as usual). Thus, the desired equivalence of (i) and (ii) will follow once we show that $\F=\bigcap R$, but this is precisely the content of  \cite[Proposition 4.3]{paulsen2011}.
\end{proof}

We remark that condition (ii) above can further be understood in terms of containment of $\pi(A)$ inside every injective operator system $R\subset B(H)$ that contains $\pi(S)$;  see Corollary \ref{C:uniqueCE}. 

The operator system $S$ is said to be \emph{hyperrigid} in $A$ if every unital $*$-representation of $A$ has the unique extension property with respect to $S$. The following is well known; it appears implicitly for instance the proof of \cite[Theorem 2.1]{arveson2011}. Since we lack an exact reference, we give the short argument here.

\begin{lemma}\label{L:hrisom}
Let $A$ be a  unital $\rC^*$-algebra and let $S\subset A$ be an operator system such that $A=\rC^*(S)$. Then, the following statements are equivalent.
\begin{enumerate}[{\rm (i)}]
\item $S$ is hyperrigid in $A$.
\item Every injective  unital $*$-representation  of $A$  has the unique extension property with respect to $S$.
\end{enumerate}
\end{lemma}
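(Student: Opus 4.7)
The direction (i)$\Rightarrow$(ii) is immediate from the definition of hyperrigidity, since every injective unital $*$-representation is in particular a unital $*$-representation. The substance of the lemma is the converse (ii)$\Rightarrow$(i), and the plan is to reduce an arbitrary unital $*$-representation to an injective one by direct summing with a faithful representation.

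More precisely, let $\pi:A\to B(H_\pi)$ be an arbitrary unital $*$-representation of $A$, and fix once and for all a faithful (hence injective) unital $*$-representation $\sigma:A\to B(K)$, for instance the universal representation. The key observation is that $\pi\oplus\sigma:A\to B(H_\pi\oplus K)$ is a unital $*$-representation which is injective (because $\sigma$ is), so by hypothesis it enjoys the unique extension property with respect to $S$. To show that $\pi$ itself has the unique extension property, suppose $\phi:A\to B(H_\pi)$ is a unital completely positive map with $\phi|_S=\pi|_S$. Then $\phi\oplus\sigma:A\to B(H_\pi\oplus K)$ is unital completely positive, and it agrees with $\pi\oplus\sigma$ on $S$. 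The unique extension property for $\pi\oplus\sigma$ forces $\phi\oplus\sigma=\pi\oplus\sigma$, from which compressing to the first summand yields $\phi=\pi$, as desired.

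There is essentially no obstacle to this argument; the only point deserving a moment's attention is to confirm that $\phi\oplus\sigma$ is genuinely unital and completely positive (it is, as a direct sum of a unital completely positive map and a unital $*$-homomorphism) and that injectivity of $\pi\oplus\sigma$ follows from injectivity of $\sigma$. Since no previously established machinery from the paper is actually needed for this step, the proof plan is short and self-contained.
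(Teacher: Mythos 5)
Your proof is correct and follows essentially the same route as the paper: reduce an arbitrary unital $*$-representation $\pi$ to the injective representation $\pi\oplus\sigma$ by direct summing with a faithful representation. The only difference is that where the paper cites an external lemma to pass the unique extension property from $\pi\oplus\sigma$ back to the summand $\pi$, you verify this step directly via the map $\phi\oplus\sigma$, which is a valid (and indeed the easy) direction of that lemma.
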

\begin{proof}
(i)$\Rightarrow$(ii): This is trivial.

(ii)$\Rightarrow$(i): Let $\pi:A\to B(H)$ be an arbitrary unital $*$-representation. Choose an injective unital $*$-representation $\sigma:A\to B(K)$. Then, $\pi\oplus \sigma$ is injective on $A$, so that $\pi\oplus \sigma$ has the unique extension property with respect to $S$ by assumption. In turn, by \cite[Lemma 2.8]{CTh2022}, we conclude that $\pi$ has the unique extension property with respect to $S$. Hence $S$ is hyperrigid in $A$.
\end{proof}

We can also give a new characterization of hyperrigidity for operator systems in terms of $\rC^*$-supports.

\begin{corollary}\label{C:hr}
Let $A$ be a  unital $\rC^*$-algebra and let $S\subset A$ be an operator system such that $A=\rC^*(S)$.  Then, the following statements are equivalent.
\begin{enumerate}[{\rm (i)}]
\item $S$ is hyperrigid in $A$.
\item Let $\pi:A\to B(H)$ be an injective unital $*$-representation. Then, $\pi(A)$ is the unique $\rC^*$-support of $\pi(S)$.
\end{enumerate}
\end{corollary}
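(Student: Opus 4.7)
The plan is to deduce this corollary directly by combining the two preceding results, namely Lemma \ref{L:hrisom} and Corollary \ref{C:uep}. Indeed, both statements in Corollary \ref{C:hr} are naturally expressed as quantifications over injective unital $*$-representations, so there is essentially no additional work to do beyond linking these pieces together.

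More precisely, for the implication (i)$\Rightarrow$(ii), I would start by assuming $S$ is hyperrigid in $A$ and fix an arbitrary injective unital $*$-representation $\pi:A\to B(H)$. By definition of hyperrigidity, $\pi$ has the unique extension property with respect to $S$. Corollary \ref{C:uep} then yields that $\pi(A)$ is the unique $\rC^*$-support of $\pi(S)$.

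For the converse (ii)$\Rightarrow$(i), I would fix an arbitrary injective unital $*$-representation $\pi:A\to B(H)$ and apply the assumption to conclude that $\pi(A)$ is the unique $\rC^*$-support of $\pi(S)$. Corollary \ref{C:uep} then implies that $\pi$ has the unique extension property with respect to $S$. Since this holds for every injective unital $*$-representation of $A$, Lemma \ref{L:hrisom} allows us to upgrade this to the conclusion that $S$ is hyperrigid in $A$.

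There is no real obstacle here; the result is essentially a repackaging of Corollary \ref{C:uep} via the reduction to injective representations provided by Lemma \ref{L:hrisom}. The only subtle point worth highlighting is that the hypothesis of Corollary \ref{C:uep} requires the representation to be injective, which is precisely why Lemma \ref{L:hrisom} is needed to bridge the gap between hyperrigidity (a statement about \emph{all} representations) and unique extension property for injective representations.
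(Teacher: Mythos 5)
Your proposal is correct and follows exactly the paper's argument: the paper also proves this corollary by combining Corollary \ref{C:uep} with Lemma \ref{L:hrisom}, and your write-up simply makes the two implications explicit.
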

\begin{proof}
This follows at once upon combining Corollary \ref{C:uep} with Lemma \ref{L:hrisom}.
\end{proof}

Hyperrigid operator systems are plentiful amongst naturally occurring concrete classes of operator systems; see for instance \cite{CH2018},\cite{kim2021},\cite{HK2019},\cite{KR2020},\cite{scherer2024} and the references therein. For such operator systems, the previous corollary shows that $\rC^*$-supports are severely constrained.

\section{The space of abnormalities}\label{S:ab}
As discussed in the introduction, one of our motivations for performing a detailed study of $\rC^*$-supports is to improve our understanding of abnormalities and to sharpen Kakariadis' result \eqref{Eq:Kak}. We tackle this question in this section.

Let $S\subset B(H)$ be an operator system and put $A=\rC^*(S)$. Let $E$ denote the set of unital completely positive maps $\phi:A\to B(H)$ agreeing with the identity on $S$. The set of  \emph{abnormalities} of $S$ is defined to be
\[
\Ab(\S)=\{\phi(a)-a:a\in A, \phi\in E\}.
\]
Our first aim is to show that the set of abnormalities is always contained in the kernel of certain maps. To streamline the exposition, we introduce the following terminology.
Let $R$ be another operator system and let $\pi:A\to R$ be a unital completely positive map. We say that $\pi$ has the \emph{unique $R$-extension property} with respect to $S$ if, given any unital completely positive map $\psi:A\to R$ such that $\pi|_S=\psi|_S$, it follows that $\pi=\psi$. The key point here is that we restrict the range of the extension $\psi$ to lie in $R$, thus making this notion a priori weaker than the previously introduced unique extension property (see Subsection \ref{SS:uep}). Such relaxed versions of the unique extension property have appeared in recent works \cite{CTh2024},\cite{DT2025}.

\begin{lemma}\label{L:abker}
Let $S\subset B(H)$ be an operator system and put $A=\rC^*(S)$. Then, the following statements hold.
\begin{enumerate}[{\rm(i)}]
\item Let $R$ be an operator system and let $\pi:B(H)\to R$ be a unital completely positive map with the property that $\pi|_{A}$ has the unique $R$-extension property with respect to $S$. Then, $\Ab(S)\subset \ker \pi$.
\item Let $\theta:B(H)\to B(H)$ be a minimal $S$-projection. Then, $\Ab(S)\subset \ker \theta$.
\end{enumerate}
\end{lemma}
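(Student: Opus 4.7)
The plan is to prove (i) directly from the definitions, and then deduce (ii) by exhibiting a concrete map $\pi$ to which part (i) applies and whose kernel coincides with $\ker \theta$.

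For (i), I would fix an abnormality $\phi(a) - a$, with $\phi \in E$ and $a \in A$. Since $\phi|_S = \id_S$, the composite $\pi \circ \phi : A \to R$ is a unital completely positive map that agrees with $\pi|_A$ on $S$. By the unique $R$-extension hypothesis, $\pi \circ \phi = \pi|_A$, hence $\pi(\phi(a) - a) = 0$.

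For (ii), I would apply (i) with $R = \CE(R_\theta, \theta)$, where $R_\theta = \ran \theta$, and with $\pi = \alpha_\theta \circ \theta : B(H) \to \CE(R_\theta, \theta)$. Since $\alpha_\theta$ is bijective on $R_\theta$, we have $\ker \pi = \ker \theta$, so the conclusion $\Ab(S) \subset \ker \theta$ reduces to verifying that $\pi|_A$ has the unique $\CE(R_\theta,\theta)$-extension property with respect to $S$. Suppose then that $\psi : A \to \CE(R_\theta, \theta)$ is a unital completely positive map with $\psi|_S = \pi|_S = \alpha_\theta|_S$. Then $\alpha_\theta^{-1} \circ \psi : A \to R_\theta \subset B(H)$ fixes $S$ pointwise, and Arveson's extension theorem produces a unital completely positive extension $\tilde\Psi : B(H) \to B(H)$. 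Setting $\Psi = \theta \circ \tilde\Psi$ gives an $S$-map into $R_\theta$, so Proposition \ref{P:minuep} yields $\Psi|_A = \theta|_A$. Because $\alpha_\theta^{-1} \circ \psi$ already takes values in $R_\theta$, on which $\theta$ acts as the identity, we get $\Psi|_A = \alpha_\theta^{-1} \circ \psi$, so $\psi = \alpha_\theta \circ \theta|_A = \pi|_A$.

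The main obstacle is the verification of the unique $\CE(R_\theta,\theta)$-extension property in step (ii). Once phrased correctly, it is simply an instance of the rigidity of the injective envelope encoded in Proposition \ref{P:minuep}, but implementing it requires carefully threading an arbitrary extension back from $\CE(R_\theta,\theta)$ to a $B(H)$-valued $S$-map via $\alpha_\theta^{-1}$ and $\theta$ so that the proposition may be invoked. After that, part (i) does the remaining work for free.
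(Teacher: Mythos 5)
Your proof is correct and follows essentially the same route as the paper: part (i) is verbatim the paper's argument, and part (ii) rests, as in the paper, on the rigidity statement of Proposition \ref{P:minuep}. The only difference is cosmetic: the paper applies (i) directly with $\pi=\theta$ viewed as a map onto its range $R_\theta$ (whose unique $R_\theta$-extension property is exactly Proposition \ref{P:minuep}, after extending a competing map by injectivity of $R_\theta$), whereas you take the harmless detour through $\alpha_\theta\circ\theta$ and the Choi--Effros algebra $\CE(R_\theta,\theta)$, which changes nothing since $\alpha_\theta$ is injective.
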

\begin{proof}
(i) Let $d\in \Ab(S)$, so that there is a unital completely positive map $\phi:A\to B(H)$ with $\phi|_S=\id$  and $a\in A$ such that $d=\phi(a)-a$. We see that $\pi\circ \phi:A\to R$ is a unital completely positive extension of $\pi|_{S}$. Thus, the assumption on $\pi$ yields $\pi|_{A}=\pi\circ \phi$, so $\pi(a)=(\pi\circ \phi)(a)$. Equivalently, $\pi(d)=0$, as desired.

(ii) Let $R\subset B(H)$ denote the range of $\theta$. Then, $\theta|_{A}$ has the unique $R$-extension property with respect to $S$ by virtue of  Proposition \ref{P:minuep}, so the conclusion follows from (i).
\end{proof}

In what follows, we let $\Sigma\subset A$ denote the Shilov ideal of $S$ in $A$. Recall that if $(A,j)$ is a $\rC^*$-envelope of $S$,  $\Sigma$ is the kernel of the unique unital surjective $*$-homomorphism $\pi:\rC^*(S)\to A$ such that $\pi|_S=j$. Apply  Proposition \ref{P:CEcomp}(i) with $\psi=\id$ to see that 
\begin{equation}\label{Eq:shilov}
\Sigma=\ker (\alpha_\theta\circ \theta)\cap A
\end{equation}
for any minimal $S$-projection $\theta:B(H)\to B(H)$. Combining this with Lemma \ref{L:abker}, we see that 
\begin{equation}\label{Eq:Abincl}
\Ab(S)\cap A \subset \Sigma.
\end{equation}
 The converse inclusion also holds, as shown in \cite[Proposition 3.4]{kakariadis2013}. We record the statement for ease of reference below.

\begin{proposition}\label{P:abShi}
Let $S\subset B(H)$ be an operator system and let $A=\rC^*(S)$. Then, $\Sigma=\Ab(S)\cap A$.
\end{proposition}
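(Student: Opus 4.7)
The inclusion $\Ab(S)\cap A\subset \Sigma$ is already recorded in \eqref{Eq:Abincl}, so the plan is to focus entirely on the reverse containment $\Sigma\subset \Ab(S)\cap A$. Given $a\in \Sigma$, the aim is to exhibit $a$ in the form $\phi(b)-b$ for some unital completely positive map $\phi:A\to B(H)$ restricting to the identity on $S$ and some $b\in A$.

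To produce such a $\phi$, I would invoke Theorem \ref{T:minproj} to select a minimal $S$-projection $\theta:B(H)\to B(H)$. Because $\alpha_\theta$ is a complete isometry and hence injective, the identity \eqref{Eq:shilov} simplifies to $\Sigma=\ker\theta\cap A$; in particular $\theta(a)=0$. Taking $\phi=\theta|_A:A\to B(H)$ yields a unital completely positive map that acts as the identity on $S$, since $\theta$ is an $S$-map. Setting $b=-a\in A$, one then computes
\[
\phi(b)-b=\theta(-a)-(-a)=0+a=a,
\]
which exhibits $a$ as an abnormality lying in $A$.

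The substantive content of the argument rests entirely on the identification $\Sigma=\ker\theta\cap A$, which is already extracted from Proposition \ref{P:CEcomp}(i) in \eqref{Eq:shilov}. Once this is in hand, there is essentially no obstacle: the required $\phi$ is just $\theta|_A$ itself, and representing $a$ as an abnormality reduces to a one-line calculation. I would not anticipate needing any additional input beyond the machinery developed in Section \ref{S:injenv}.
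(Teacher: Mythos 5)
Your argument is correct. The paper itself does not prove this proposition: it establishes the inclusion $\Ab(S)\cap A\subset\Sigma$ via \eqref{Eq:Abincl} and then cites Kakariadis \cite{kakariadis2013} for the reverse containment. Your proof supplies that missing direction self-containedly from the paper's own machinery, and it is the natural argument: by \eqref{Eq:shilov} and injectivity of $\alpha_\theta$ one has $\Sigma=\ker\theta\cap A$ for any minimal $S$-projection $\theta$ (which exists by Theorem \ref{T:minproj} applied with $\psi=\id$), and then $\theta|_A\in E$ exhibits any $a\in\Sigma$ as $\theta(-a)-(-a)$, hence as an abnormality lying in $A$. The small sign trick with $b=-a$ is fine (and could even be avoided, since $\Sigma$ is an ideal and hence closed under negation, so $-a\in\Sigma$ and $\theta(a)-a=-a$ already shows $-a\in\Ab(S)$); in substance this is the same mechanism underlying the cited result of Kakariadis, so there is no gap and nothing essentially new to reconcile with the paper's treatment.
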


Our next task is to give a description of the full set $\Ab(S)$.  We define the \emph{expanse} of $S$ to be the operator system 
\[
\Exp(S)=\spn\{\phi(A) \}
\] 
where the span is taken over all $S$-maps $\phi:B(H)\to B(H)$.  We now arrive at our announced description of the abnormalities, which is the second main result of the paper.

\begin{theorem}\label{T:abdescr}
Let $S\subset B(H)$ be an operator system. Then, \[\spn \Ab(S)=\Exp(S)\cap \ker \theta\] for every minimal $S$-projection $\theta:B(H)\to B(H)$.
\end{theorem}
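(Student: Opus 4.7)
The plan is to prove the two inclusions separately, with the easier inclusion $\spn\Ab(S)\subset \Exp(S)\cap \ker\theta$ done first and serving as a warm-up, and the reverse inclusion $\Exp(S)\cap \ker\theta\subset \spn\Ab(S)$ being the main content. Throughout, we use freely that, by \eqref{Eq:shilov} together with the fact that $\alpha_\theta$ is completely isometric on $R=\ran\theta$, the equality $\Sigma=A\cap\ker\theta$ holds, where $\Sigma$ denotes the Shilov ideal of $S$ in $A=\rC^*(S)$.

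For the first inclusion, fix $d=\phi(a)-a$ with $\phi\in E$ and $a\in A$. Lemma \ref{L:abker}(ii) already places $d$ in $\ker\theta$, so I only need to show $d\in \Exp(S)$. Using Arveson's extension theorem, $\phi:A\to B(H)$ extends to a unital completely positive map $\Phi:B(H)\to B(H)$, which is automatically an $S$-map since $\Phi|_S=\id$. Thus $\phi(a)=\Phi(a)\in \Phi(A)\subset \Exp(S)$, while $a=\id(a)$ lies in $\Exp(S)$ as well. Hence $d\in \Exp(S)$, and after taking spans the first inclusion follows.

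For the reverse inclusion, take $x\in \Exp(S)\cap \ker\theta$ and write $x=\sum_{i=1}^n \phi_i(a_i)$ for some $S$-maps $\phi_i:B(H)\to B(H)$ and elements $a_i\in A$. The key observation is that each composition $\theta\circ \phi_i:B(H)\to R$ is again an $S$-map, so Proposition \ref{P:minuep} forces $\theta\circ \phi_i$ to agree with $\theta$ on $A$; in particular $\theta(\phi_i(a_i))=\theta(a_i)$ for each $i$. Setting $b=\sum_i a_i\in A$, the condition $\theta(x)=0$ therefore reduces to $\theta(b)=0$, so $b\in A\cap\ker\theta=\Sigma$, and Proposition \ref{P:abShi} gives $b\in \Ab(S)$. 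Now decompose
\[
x=\sum_{i=1}^n\bigl(\phi_i(a_i)-a_i\bigr)+b,
\]
noting that each $\phi_i|_A$ restricts to an element of $E$, so each term $\phi_i(a_i)-a_i$ lies in $\Ab(S)$. Consequently $x\in \spn\Ab(S)$, which completes the proof.

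The main subtlety is ensuring that Proposition \ref{P:minuep} can be applied in the context of the second inclusion, since that result is stated for $S$-maps whose range lies in $R$, whereas the $S$-maps $\phi_i$ appearing in the definition of $\Exp(S)$ take values in $B(H)$. Precomposing with nothing and postcomposing with $\theta$ is the correct move, and it is precisely this maneuver that reduces the problem about $\Exp(S)$ to a statement about the Shilov ideal inside $A$, at which point Kakariadis' result closes the argument.
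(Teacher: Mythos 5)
Your argument is correct and follows essentially the same route as the paper's proof: Lemma \ref{L:abker} for the forward inclusion, and for the reverse inclusion the composition $\theta\circ\phi_i$ together with Proposition \ref{P:minuep}, the identification $\Sigma=A\cap\ker\theta$ from \eqref{Eq:shilov}, and Proposition \ref{P:abShi}, followed by the same decomposition $x=b+\sum_i(\phi_i(a_i)-a_i)$. The only (harmless) difference is that you make explicit, via Arveson's extension theorem, why $\Ab(S)\subset\Exp(S)$, a point the paper treats as immediate from the definitions.
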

\begin{proof}
Throughout the proof we write $A=\rC^*(S)$. It follows immediately from the definition that $\Ab(S)\subset \Exp(S)$. Fix a minimal $S$-projection $\theta:B(H)\to B(H)$.
Then,  \[\spn \Ab(S)\subset \Exp(S)\cap \ker \theta\]  by Lemma \ref{L:abker}. Conversely, let $t\in \Exp(S)\cap \ker \theta$. Then, there are elements $a_0,a_1,\ldots,a_n\in A$ and $S$-maps $\phi_1,\ldots,\phi_n$ such that $t=a_0+\sum_{j=1}^n \phi_j(a_j)$. Let $R\subset B(H)$ denote the range of $\theta$. For each $1\leq j\leq n$, we see that $\theta\circ \phi_j:B(H)\to R$ is a completely positive map with $\theta$ on $S$. By Proposition \ref{P:minuep}, we infer $\theta\circ \phi_j=\theta$ on $A$. On the other hand, we know that $t\in \ker \theta$, whence
\[
0=\theta(t)=\theta(a_0)+\sum_{j=1}^n (\theta\circ \phi_j)(a_j)=\theta( a_0+a_1+\ldots+a_n).
\]
Thus, the element $b=a_0+\ldots+a_n$ lies in $\ker \theta\cap A$, so in fact $b\in \Sigma$ by \eqref{Eq:shilov}. In turn, Proposition \ref{P:abShi} implies that  $b\in \Ab(S)$. Finally, we find
\[
t=a_0+\sum_{j=1}^n \phi_j(a_j) =b+\sum_{j=1}^n (\phi_j(a_j)-a_j)\in \spn \Ab(S).\qedhere
\]
\end{proof}

In light of the previous result, one may wonder if $\Ab(S)$ is a subspace of $B(H)$. It follows from Proposition \ref{P:abShi} that $\Ab(S)\cap A$ is indeed a subspace, but we do not know if the same is true of $\Ab(S)$.

Next,  Theorem \ref{T:abdescr} makes it desirable to gain a good understanding of the set $\Exp(S)$. Towards this goal, we examine a more structured subset of it.  We define the \emph{$\rC^*$-expanse} of $S$ as
\[
\rC^*\Exp(S)=\spn\{\theta(A) \}
\] 
where the span is taken over all minimal $S$-projections $\theta:B(H)\to B(H)$.

This last definition can be reformulated slightly. Indeed, let $\theta:B(H)\to B(H)$ be an $S$-projection. Because $\alpha_\theta\circ \theta$ is a $*$-homomorphism on $\rC^*(\ran \theta)$ and $A=\rC^*(S)$, it follows that $\CE(S,\theta)=(\alpha_\theta\circ \theta)(A)$, so that $\alpha_\theta^{-1}(\CE(S,\theta))=\theta(A)$. We conclude that
\begin{equation}\label{Eq:exp}
\rC^*\Exp(S)=\spn\{\alpha_\theta^{-1}(\CE(S,\theta)) \}
\end{equation}
where the span is taken over all minimal $S$-projections $\theta:B(H)\to B(H)$.
As a consequence, we obtain the following interpretation of the $\rC^*$-expanse.

\begin{theorem}\label{T:expanse}
Let $S\subset B(H)$ be an operator system and let $A=\rC^*(S)$. Assume that the Shilov ideal of $S$ in $A$ is trivial. Then, the following statements hold.
\begin{enumerate}[{\rm (i)}]
\item If $\phi:B(H)\to B(H)$ is an $S$-map, then there is a minimal $S$-projection $\theta:B(H)\to B(H)$ such that $\phi=\theta$ on $A$.
\item $\rC^*\Exp(S)=\Exp(S)$
\item $\rC^*\Exp(S)$ is the smallest subspace generated by all $\rC^*$-supports of $S$.
\end{enumerate}
\end{theorem}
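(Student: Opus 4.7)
The plan is to prove (i), (ii), (iii) in sequence, with (i) doing all the real work. Since the Shilov ideal of $S$ in $A$ is trivial, Theorem \ref{T:Shtrivial} furnishes a minimal $S$-projection $\theta_0: B(H)\to B(H)$ whose range $R_0$ contains $A$; in particular, $\theta_0|_A=\id$. Given any $S$-map $\phi: B(H)\to B(H)$, my candidate for (i) is $\theta := \phi\circ\theta_0$, which trivially satisfies $\theta|_A = \phi|_A$ since $\theta_0$ fixes $A$.

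The key driving observation is that $\theta_0\circ \phi|_{R_0}: R_0\to R_0$ is an $S$-map on $R_0$ and hence equals $\id_{R_0}$ by rigidity of the injective envelope $(R_0, i)$ (Proposition \ref{P:injenvchar}, Theorem \ref{T:Hamana}). Unfolding this identity, idempotency of $\theta$ is a short calculation: $\theta\circ\theta=\phi\circ(\theta_0\circ\phi)\circ\theta_0=\phi\circ\theta_0=\theta$, since $(\theta_0\circ\phi)$ acts as the identity on $\ran\theta_0=R_0$. So $\theta$ is an $S$-projection with range $\phi(R_0)$, an injective operator system. To conclude minimality of $\theta$, by Theorem \ref{T:Hamana} it suffices to show that $(\phi(R_0),i)$ is an injective envelope of $S$, which by Proposition \ref{P:injenvchar} reduces to a rigidity check. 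Given a UCP map $\mu:\phi(R_0)\to\phi(R_0)$ fixing $S$, I form $\theta_0\circ\mu\circ\phi|_{R_0}: R_0\to R_0$, which is an $S$-map and hence equals $\id_{R_0}$ by rigidity of $R_0$. The step I expect to be the main obstacle is then to unwind this identity back to $\mu=\id$ on $\phi(R_0)$. This is accomplished by noting that $\phi|_{R_0}$ is injective (indeed, a complete isometry, being a right-inverse of $\theta_0$), so for any $c=\phi(a)\in\phi(R_0)$ one writes $\mu(c)=\phi(a')$ and applies $\theta_0$ to both sides to deduce $a'=a$.

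Parts (ii) and (iii) follow formally from (i) together with results already in the paper. For (ii), the inclusion $\rC^*\Exp(S)\subset\Exp(S)$ is definitional, while (i) shows that any $\phi(A)$ appearing in $\Exp(S)$ coincides with $\theta(A)$ for some minimal $S$-projection $\theta$, yielding the reverse inclusion. For (iii), Theorem \ref{T:Cstarext} combined with the identification \eqref{Eq:exp} expresses the set of $\rC^*$-supports of $S$ as $\{\alpha_\psi^{-1}(\CE(S,\psi))=\psi(A): \psi \text{ an } S\text{-projection}\}$. Every such $\psi(A)$ lies in $\Exp(S)=\rC^*\Exp(S)$ by (ii), and conversely $\rC^*\Exp(S)$ is by definition a span of $\rC^*$-supports (those coming from minimal $S$-projections), giving the two inclusions that establish the claim.
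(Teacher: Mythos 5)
Your proof is correct and follows essentially the same route as the paper: the trivial Shilov ideal yields (via Theorem \ref{T:Shtrivial} and Theorem \ref{T:Hamana}) a minimal $S$-projection $\theta_0$ whose range contains $A$, the desired minimal projection is $\theta=\phi\circ\theta_0$, and (ii), (iii) follow formally. The only differences are that where the paper simply cites \cite[Proposition 2.11]{paulsen2011} for the minimality of $\phi\circ\theta_0$, you re-prove that fact directly through rigidity of the injective envelope (your idempotency and rigidity checks are sound), and in (iii) you route through Theorem \ref{T:Cstarext} together with part (ii) rather than through Corollary \ref{C:coversenv}; both variants are valid.
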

\begin{proof}
(i) Since the Shilov ideal of $S$ in $A$ is trivial, we see that $(A,i)$ is a $\rC^*$-envelope of $S$, where $i:S\to B(H)$ is the inclusion map. We may apply Theorem \ref{T:Shtrivial} to find an injective operator system $R\subset B(H)$  containing $A$ such that $(R,j)$ is an injective envelope of $S$, where $j:R\to B(H)$ is the inclusion. By Theorem \ref{T:Hamana}, there is thus a minimal $S$-projection $\omega:B(H)\to B(H)$ with range $R$, and in particular  $\omega=\id$ on $A$.  Finally, by \cite[Proposition 2.11]{paulsen2011}, we see that the map $\theta=\phi\circ \omega$ is another minimal $S$-projection which satisfies $\phi=\theta$ on $A$.

(ii) Trivially, the inclusion $\rC^*\Exp(S\subset \Exp(S)$ always holds. Conversely,  we may apply (i) to see that $\Exp(S)\subset \rC^*\Exp(S)$.

(iii) Let $Y\subset B(H)$ denote the smallest subspace generated by all $\rC^*$-supports of $S$. By virtue of \eqref{Eq:exp}, we see that $ \rC^*\Exp(S)\subset Y$. Conversely,  let $X\subset B(H)$ be a $\rC^*$-support of $S$. Combining Corollary \ref{C:coversenv} with Theorem \ref{T:Shtrivial}, we see that there is a minimal $S$-projection $\theta:B(H)\to B(H)$ such that $X=\alpha_\theta^{-1}(\CE(S,\theta))\subset \rC^*\Exp(S)$. This shows that $Y\subset \rC^*\Exp(S)$.
\end{proof}

As an application, we show next that in the definition of $\Ab(S)$, it suffices to consider only those maps $\phi\in E$ that are minimal $S$-projections, provided that the Shilov ideal is assumed to be trivial. 

\begin{corollary}\label{C:abminproj}
Let $S\subset B(H)$ be an operator system and let $A=\rC^*(S)$. Assume that the Shilov ideal of $S$ in $A$ is trivial. Let $t\in B(H)$. Then, $t\in \Ab(S)$ if and only if there is a minimal $S$-projection $\theta:B(H)\to B(H)$ and an element $a\in A$ such that $t=\theta(a)-a$.
\end{corollary}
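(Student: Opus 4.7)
The idea is that this corollary is essentially an immediate consequence of Theorem \ref{T:expanse}(i), with the forward direction reducing a general abnormality to the form coming from a minimal $S$-projection.

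For the ``if'' direction, the plan is entirely routine: given a minimal $S$-projection $\theta:B(H)\to B(H)$ and $a\in A$, the restriction $\theta|_A:A\to B(H)$ is a unital completely positive map that agrees with the identity on $S$, so $\theta|_A$ belongs to the set $E$ from the definition of $\Ab(S)$. Hence $\theta(a)-a=\theta|_A(a)-a$ lies in $\Ab(S)$.

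For the ``only if'' direction, I would start from an element $t\in\Ab(S)$ written as $t=\phi(a)-a$ for some $a\in A$ and some unital completely positive map $\phi:A\to B(H)$ with $\phi|_S=\id$. By Arveson's extension theorem, $\phi$ extends to a unital completely positive map $\widetilde\phi:B(H)\to B(H)$, and since $\widetilde\phi|_S=\phi|_S=\id$, this extension is an $S$-map. Now the Shilov ideal of $S$ in $A$ is trivial, so Theorem \ref{T:expanse}(i) applies and yields a minimal $S$-projection $\theta:B(H)\to B(H)$ with $\widetilde\phi=\theta$ on $A$. In particular, $\phi(a)=\widetilde\phi(a)=\theta(a)$, and hence $t=\theta(a)-a$, as required.

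There is really no obstacle here; the entire content is shifted onto Theorem \ref{T:expanse}(i), whose proof in turn used the hypothesis on the Shilov ideal (via Theorem \ref{T:Shtrivial}) to produce a minimal $S$-projection acting as the identity on $A$, which could then be post-composed with the given $S$-map to remain a minimal $S$-projection agreeing with it on $A$. The only point worth being careful about is the application of Arveson's extension theorem to pass from $\phi\in E$, whose domain is only $A$, to an $S$-map defined on all of $B(H)$, so that Theorem \ref{T:expanse}(i) becomes applicable.
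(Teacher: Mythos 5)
Your proposal is correct and matches the paper's argument: the paper simply cites Theorem \ref{T:expanse}(i), and your write-up fills in exactly the implicit steps (the routine ``if'' direction, plus the Arveson extension of $\phi\in E$ to an $S$-map on $B(H)$ so that Theorem \ref{T:expanse}(i) applies).
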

\begin{proof}
This follows immediately from Theorem \ref{T:expanse}(i).
\end{proof}

We close this section with an application of the previous results, motivated by the following complement to Proposition \ref{P:minuep}.

\begin{proposition}\label{P:quotienttight}
Let $S\subset B(H)$ be an operator system and let $A=\rC^*(S)$. Let $\theta:B(H)\to B(H)$ be a minimal $S$-projection, and let $q_\theta:B(H)\to B(H)/\ker\theta$ denote the quotient map. Then, $q_\theta$ is completely isometric on the range of $\theta$, and $q_\theta|_A$ has the  unique $(B(H)/\ker \theta)$-extension property with respect to $S$.
\end{proposition}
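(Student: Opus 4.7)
The proof breaks naturally into two parts. For the claim that $q_\theta$ is completely isometric on $R:=\ran\theta$, I would exploit the fact that $\theta$ is UCP and idempotent: the amplification $\theta^{(n)}$ is completely contractive and fixes every element of $M_n(R)$. Thus for any $[r_{ij}]\in M_n(R)$ and $[k_{ij}]\in M_n(\ker\theta)$,
\[
\|[r_{ij}]\|=\|\theta^{(n)}([r_{ij}-k_{ij}])\|\le \|[r_{ij}-k_{ij}]\|,
\]
and taking the infimum over $[k_{ij}]$ shows that the operator space quotient norm of $q_\theta^{(n)}([r_{ij}])$ coincides with $\|[r_{ij}]\|$. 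Observing additionally that $b-\theta(b)\in\ker\theta$ for each $b\in B(H)$, the restriction $q_\theta|_R:R\to B(H)/\ker\theta$ is also surjective, and therefore is a unital surjective complete isometry. Its inverse $\bar\theta:B(H)/\ker\theta\to R$ is then also a unital complete isometry, and hence a complete order isomorphism between operator systems, so in particular UCP.

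For the second assertion, suppose $\psi:A\to B(H)/\ker\theta$ is a UCP map with $\psi|_S=q_\theta|_S$, and form $\bar\theta\circ\psi:A\to R$. This is a UCP map satisfying $(\bar\theta\circ\psi)|_S=\bar\theta\circ q_\theta|_S=\id_S$. Using that $R$ is an injective operator system, I would extend $\bar\theta\circ\psi$ to a UCP map $\Phi:B(H)\to R$, which then qualifies as an $S$-map. Proposition \ref{P:minuep} applied to $\Phi$ forces $\Phi=\theta$ on $A$, so $\bar\theta\circ\psi=\theta|_A$. Composing with $q_\theta|_R=\bar\theta^{-1}$ and using that $q_\theta\circ\theta=q_\theta$ identically on $B(H)$, I obtain $\psi=q_\theta|_A$.

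The only real subtlety I anticipate lies in juggling the operator space quotient norm used to verify the first assertion with the operator system structure on $B(H)/\ker\theta$ needed to talk about UCP maps into the quotient. These become compatible precisely because $q_\theta|_R$ turns out to be a unital complete isometry \emph{onto} $B(H)/\ker\theta$; once this is established, the standard principle that unital complete isometries between operator systems are complete order isomorphisms makes $\bar\theta$ into an honest UCP map, after which Proposition \ref{P:minuep} finishes the job.
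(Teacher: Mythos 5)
Your handling of the second assertion is essentially the paper's argument: compose the competing extension with the inverse map into $R=\ran\theta$, extend by injectivity of $R$ to an $S$-map on all of $B(H)$, and apply Proposition \ref{P:minuep} (you are in fact slightly more careful than the paper, which applies Proposition \ref{P:minuep} to a map defined only on $A$ without spelling out the extension step). The genuine problem sits in the first half, exactly at the point you flag and then wave away. Your norm computation shows that $q_\theta$ is completely isometric on $R$ with respect to the \emph{operator space} quotient norm on $B(H)/\ker\theta$. But the statement, and all the talk of unital completely positive maps into $B(H)/\ker\theta$ in the second half, refers to the \emph{operator system} quotient structure of \cite{KPTT2013} (this is what the paper's proof uses), and the matrix norms of the operator system quotient are in general only dominated by the operator space quotient norms: the canonical map from the operator space quotient onto the operator system quotient is completely contractive but need not be completely isometric. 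So knowing that $q_\theta|_R$ is a unital complete isometry onto the operator space quotient does not license the principle "unital complete isometries between operator systems are complete order isomorphisms"; that principle needs the complete isometry with respect to the very operator system structure in which you then want $\bar\theta$ to be unital completely positive, and that is precisely what has not been established. As written, the bridge between the two structures is circular.

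The gap is short to close. The cleanest repair is the paper's own route, which makes your norm computation unnecessary: since $\theta$ is unital completely positive and vanishes on $\ker\theta$, the universal property of the operator system quotient yields a unital completely positive map $\lambda_\theta:B(H)/\ker\theta\to R$ with $\lambda_\theta\circ q_\theta=\theta$; because $q_\theta\circ\theta=q_\theta$, one gets $q_\theta\circ\lambda_\theta=\id$, and complete contractivity of both $q_\theta$ and $\lambda_\theta$ then gives at once that $q_\theta$ is completely isometric on $R$ for the operator system quotient and that your $\bar\theta=\lambda_\theta$ is honestly unital completely positive. Alternatively, you can salvage your route by checking that in this special situation the two quotient structures coincide: for $t\in\bM_n(B(H))$ one has $\theta^{(n)}(t)-t\in\bM_n(\ker\theta)$, from which it follows that $q_\theta^{(n)}(t)$ is positive in the operator system quotient if and only if $\theta^{(n)}(t)\geq 0$, i.e.\ the quotient structure is the one transported from $R$ via $q_\theta|_R$. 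With either addition your proof is complete.
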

\begin{proof}
Throughout the proof, we let $R\subset B(H)$ denote the range of $\theta$.
As explained in \cite[Section 3]{KPTT2013}, $B(H)/\ker \theta$ admits a natural operator system structure with the property that $q_\theta$ is unital and completely positive.
Since $\theta$ is idempotent, we have that $\theta(t)-t\in \ker \theta$ for each $t\in B(H)$, so $q_\theta\circ \theta=q_\theta$.  This implies that the linear isomorphism
$\lambda_\theta:B(H)/\ker \theta\to R$ defined as
\[
\lambda_\theta(q_\theta(t))= \theta(t), \quad t\in B(H)
\]
satisfies $q_\theta\circ\lambda_\theta=\id$ on $B(H)/\ker\theta$. In particular, we see that $\lambda_\theta$ is completely isometric, and $q_\theta$ is completely isometric on $R$.

Let  $\phi:A\to B(H)/\ker \theta$ be a unital completely positive map agreeing with $q_\theta$ on $S$. Then, $\lambda_\theta\circ \phi:A\to R$ is a unital completely positive map agreeing with $\lambda_\theta\circ q_\theta=\theta$ on $S$. 
Hence, Proposition \ref{P:minuep} implies that $\lambda_\theta\circ \phi=\theta|_{A}$, so that 
\[
q_\theta|_{A}=(q_\theta\circ \theta)|_{A}=q_\theta\circ \lambda_\theta\circ \phi=\phi.
\]
\end{proof}

We now give an application of Theorem \ref{T:abdescr} that has a flavour similar to that of the previous proposition. Indeed, we show that, when the Shilov ideal is trivial, there is a single quotient of $B(H)$ that is completely isometric on $S$ and for which a conclusion reminiscent of Proposition \ref{P:quotienttight} can be obtained \emph{simultaneously} for every $S$-map on $A$.

For this purpose, we recall that given a closed subspace $D\subset B(H)$, the quotient $B(H)/D$ can be given a natural operator space structure \cite[Exercise 13.3]{paulsen2002}.

\begin{corollary}\label{C:uepquotient}
Let $S\subset B(H)$ be an operator system and let $A=\rC^*(S)$. Assume that the Shilov ideal of $S$ in $A$ is trivial. Let $D\subset B(H)$ denote the norm-closed subspace generated by $\Ab (S)$, and let $\delta:B(H)\to B(H)/D$ denote the quotient map. Then, the following statements hold for each unital completely positive map $\phi:A\to B(H)$ satisfying $\phi|_S=\id$.
\begin{enumerate}[{\rm (i)}]
\item The map $\delta$ is completely isometric on $\phi(A)$.
\item We have $\delta\circ \phi=\delta$ on $A$.
\end{enumerate}
\end{corollary}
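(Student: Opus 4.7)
The plan is to use Theorem \ref{T:expanse}(i) to locate, for each given $\phi$, a minimal $S$-projection $\theta$ that agrees with $\phi$ on $A$, and then to exploit the containments $\phi(A) \subset \ran\theta$ and $D \subset \ker\theta$ simultaneously.

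Statement (ii) is almost immediate. For $a \in A$, the element $\phi(a)-a$ is by definition an abnormality of $S$, and therefore lies in $\spn\Ab(S) \subset D$. Hence $\delta(\phi(a)) = \delta(a)$ for every $a \in A$.

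For (i), I would first extend $\phi$ to a unital completely positive map $\widetilde{\phi}: B(H) \to B(H)$ via Arveson's extension theorem; this extension restricts to the identity on $S$, so it is an $S$-map on $B(H)$. By Theorem \ref{T:expanse}(i), whose hypothesis is met since we are assuming triviality of the Shilov ideal, there is a minimal $S$-projection $\theta: B(H) \to B(H)$ such that $\widetilde{\phi} = \theta$ on $A$, and in particular $\phi = \theta$ on $A$. Thus each element of $\phi(A)$ lies in $\ran\theta$ and is fixed by $\theta$. On the other hand, Lemma \ref{L:abker}(ii) gives $\Ab(S) \subset \ker\theta$, and closedness of $\ker\theta$ then forces $D \subset \ker\theta$.

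Now fix $n \geq 1$ and $T \in M_n(\phi(A))$. The operator space quotient structure on $B(H)/D$ gives $\|\delta_n(T)\| = \inf\{\|T+Y\| : Y \in M_n(D)\}$, whence $\|\delta_n(T)\| \leq \|T\|$ trivially. For the reverse inequality, pick $Y \in M_n(D) \subset M_n(\ker\theta)$, so that $\theta^{(n)}(Y) = 0$. Since $T \in M_n(\ran\theta)$ is fixed by $\theta^{(n)}$, we obtain $\theta^{(n)}(T+Y) = T$, and complete contractivity of $\theta$ then yields $\|T\| \leq \|T+Y\|$. Taking the infimum over $Y$ gives $\|T\| \leq \|\delta_n(T)\|$, which proves (i). The key conceptual step is really the invocation of Theorem \ref{T:expanse}(i), which aligns $\phi(A)$ with $\ran\theta$ for a single minimal $S$-projection whose kernel simultaneously contains $D$; once this alignment is in place, $\theta^{(n)}$ both annihilates perturbations coming from $M_n(D)$ and preserves $M_n(\phi(A))$, which is exactly the mechanism yielding the isometry.
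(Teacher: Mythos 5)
Your proof is correct and follows essentially the same route as the paper: both arguments hinge on Theorem \ref{T:expanse}(i) to produce a minimal $S$-projection $\theta$ with $\phi=\theta$ on $A$, so that $\theta^{(n)}$ fixes matrices over $\phi(A)$ while annihilating matrices over $D\subset\ker\theta$, which yields the complete isometry, and part (ii) is the same one-line observation. The only (harmless) differences are presentational: you make the Arveson extension of $\phi$ to an $S$-map on $B(H)$ explicit, you obtain $D\subset\ker\theta$ from Lemma \ref{L:abker}(ii) rather than Theorem \ref{T:abdescr}, and you verify the norm equality by a direct computation with the quotient norm instead of invoking Proposition \ref{P:quotienttight}.
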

\begin{proof}
(i)  Let $n\geq 1$ be an integer and let $[a_{ij}]\in \bM_n(A)$. Put $b=[\phi(a_{ij})]\in \bM_n(B(H))$. By Theorem  \ref{T:expanse}(i), we may find a minimal $S$-projection $\theta:B(H)\to B(H)$ such that $\phi=\theta$ on $A$. In particular, $\theta(\phi(a_{ij}))=\phi(a_{ij})$ for each $1\leq i,j\leq n$. Hence, $\theta^{(n)}(b)=b$. By Proposition \ref{P:quotienttight}, we see that $q_\theta$ is completely isometric on the range of $\theta$, so in particular
$
\|q_\theta^{(n)}(b)\|=\|b\|.
$
In turn,  it follows from Theorem \ref{T:abdescr} that $D\subset \ker \theta$, whence $\|\delta^{(n)}(b)\|\geq \|q_\theta^{(n)}(b)\|$ and $\|\delta^{(n)}(b)\|=\|b\|$ as desired.

(ii) For each $a\in A$, we see that $\phi(a)-a\in \Ab(S)\subset D$, whence $\delta(\phi(a)-a)=0$.
\end{proof}

\section{Some calculations of abnormalities}\label{S:example}
In this final section, we examine some examples from the perspective of our results on abnormalities.

\begin{example}\label{E:Ed}
Fix an integer $d\geq 2$. Let $\bF^2_d$ denote the full Fock space over $\bC^d$ and let $\lambda_1,\ldots,\lambda_d\in B(\bF^2_d)$ denote the isometric left creation operators. Let $V_d\subset B(\bF^2_d)$ denote the operator system generated by $\lambda_1,\ldots,\lambda_d$. Then, $V_d$ generates the Cuntz--Toeplitz $\rC^*$-algebra $\E_d\subset B(\bF^2_d)$, which contains the ideal $K$ of compact operators on $\bF^2_d$ \cite[Theorem 1.3]{popescu2006}.  The quotient $\E_d/K$ is $*$-isomorphic to the Cuntz algebra $\O_d$, and we let $q :\E_d\to \O_d$ denote the corresponding quotient map. For each $1\leq j\leq d$, we let $s_j=q(\lambda_j)$. Then, each $s_j$ is an isometry and $\sum_{j=1}^d s_j s_j^*=1$. By virtue of \cite[Theorem 3.1]{popescu1996}, we see that $q$ is completely isometric on $V_d$.   We claim that $\Ab(V_d)=K$. 

To see this, first note that since $\O_d$ is simple and $q$ is completely isometric on $V_d$, $(\O_d,q|_{V_d})$ is a $\rC^*$-envelope of $V_d$. Hence, the Shilov ideal of $V_d$ inside of $\E_d$ is simply $\ker q=K$. We conclude that $K=\Ab(V_d)\cap\E_d\subset \Ab(V_d)$ by Proposition \ref{P:abShi}.  For the converse, consider the quotient map $\pi:B(\bF^2_d)\to B(\bF^2_d)/K$. Choose a Hilbert space $H$ such that $B(\bF^2_d)/K\subset B(H)$, so that $\pi$ may be viewed as a unital completely positive map from $B(\bF^2_d)$ to $B(H)$ satisfying $\pi(\lambda_j)=s_j$ for each $1\leq j\leq d$. A well known multiplicative domain argument shows that $\pi$ has the unique extension property with respect to  $V_d$. Applying Lemma \ref{L:abker} we infer that $\Ab(V_d)\subset \ker \pi=K$.
\qed
\end{example}

Finally, we illustrate that the set $\Ab(S)\cap \rC^*(S)$ may be much smaller than $\Ab(S)$, thereby showing that our Theorem \ref{T:abdescr} gives information that was previously unattainable via Kakariadis' original result \eqref{Eq:Kak}.

\begin{example}\label{E:BDO}
Let $A_0$ be a unital $\rC^*$-algebra generated by some operator system $S_0$. Assume that all irreducible $*$-representations of $A_0$ have the unique extension property with respect to $S_0$, while $S_0$ is \emph{not} hyperrigid in $A_0$.  In other words, we are assuming that the inclusion $S_0\subset A_0$ fails to satisfy Arveson's hyperrigidity conjecture \cite{arveson2011}; such operator systems do exist as recently shown in \cite{BDO2024}. 

By Lemma \ref{L:hrisom}, there is an injective unital $*$-representation $\pi:A_0\to B(H)$ without the unique extension property with respect to $S_0$.  Let $S=\pi(S_0)$ and $A=\pi(A_0)$, so that $A=\rC^*(S)$. By \cite[Theorem 2.1.2]{arveson1969}  it then follows that the identity representation of $A$ does not have the unique extension property with respect to $S$, so $\Ab(S)$ is non-zero. 

On the other hand, by our assumption on $S_0$ and $A_0$, it is still true that every irreducible $*$-representation of $A$ has the unique extension property with respect to $S$. In particular, 
the Shilov ideal of $S$ in $A$ is necessarily trivial \cite[Theorem 2.2.3]{arveson1969}, so that $\Ab(S)\cap A=\{0\}$ by Proposition \ref{P:abShi}.  Thus, in this case we see that the inclusion 
\[
\{0\}=\Ab(S)\cap A\subset \Ab(S)
\]
is strict. 

By definition of $\Ab(S)$, this is equivalent to the existence of an $S$-map $\phi:B(H)\to B(H)$ that does not send $A$ back into itself.  We note in passing that this conclusion is consistent with the findings of \cite[Corollary 4.5]{thompson2024}, since in this case $\pi$ has the so-called approximate unique extension property with respect to $S_0$. \qed
\end{example}

\bibliographystyle{plain}
\bibliography{abnormalities}
	
\end{document}